\newtheorem{thm}{Theorem}
\newtheorem{lem}{Lemma}
\newtheorem{prop}{Proposition}
\newtheorem{defn}{Definition}
\newtheorem{rem}{Remark}
\newcommand{\norm}[1]{\left\Vert#1\right\Vert}
\newcommand{\abs}[1]{\left\vert#1\right\vert}
\newcommand{\To}{\rightarrow}
\newcommand{\A}{\mathcal{A}}
\newcommand{\calH}{\mathcal{H}}
\newcommand{\bsa}{\boldsymbol{a}}
\newcommand{\bslambda}{\boldsymbol{\lambda}}
\newcommand{\bsk}{\boldsymbol{k}}
\newcommand{\bsl}{\boldsymbol{l}}
\newcommand{\bsv}{\boldsymbol{v}}
\newcommand{\bsx}{\boldsymbol{x}}
\newcommand{\bsh}{\boldsymbol{h}}
\newcommand{\bsr}{\boldsymbol{r}}
\newcommand{\bsb}{\boldsymbol{b}}
\newcommand{\cO}{{\cal O}}
\newcommand{\bsy}{\boldsymbol{y}}
\newcommand{\cH}{{\cal H}}
\newcommand{\bszero}{\boldsymbol{0}}
\newcommand{\bsone}{\boldsymbol{1}}
\newcommand{\rd}{\,\mathrm{d}}
\newcommand{\e}{\,\varepsilon}
\newcommand{\NN}{\mathbb{N}}
\newcommand{\ZZ}{\mathbb{Z}}
\newcommand{\RR}{\mathbb{R}}
\newcommand{\LL}{\mathbb{L}}
\newcommand{\cG}{{\cal G}}
\newcommand{\uu}{\mathfrak{u}}
\newcommand{\lstd}{\Lambda^{\rm{std}}}
\newcommand{\lall}{\Lambda^{\rm{all}}}
\newcommand{\infp}{\operatornamewithlimits{inf\phantom{p}}}
\newcommand{\ee}{{\rm e}}
\newcommand{\APP}{{\rm APP}}
\newcommand{\rdots}{\mathinner{\mkern1mu\lower-1\p@\vbox{\kern7\p@\hbox{.}}
\mkern2mu \raise4\p@\hbox{.}\mkern2mu\raise7\p@\hbox{.}\mkern1mu}}
\date{}
\begin{document}

\title{Approximation in Hermite spaces of smooth functions}

\author{Christian Irrgeher\thanks{C. Irrgeher is supported by the Austrian Science Fund
(FWF): Project F5509-N26, which is a part of the Special Research Program
"Quasi-Monte Carlo Methods: Theory and Applications".}\;,
Peter Kritzer\thanks{P. Kritzer is supported by the Austrian Science Fund (FWF):
Project F5506-N26, which is a part of the Special Research Program "Quasi-Monte Carlo Methods:
Theory and Applications".}\;, Friedrich Pillichshammer\thanks{F. Pillichshammer is
partially supported by the Austrian Science Fund (FWF): Project F5509-N26,
which is a part of the Special Research Program "Quasi-Monte Carlo Methods:
Theory and Applications".},\\ Henryk Wo\'{z}niakowski\thanks{H. Wo\'zniakowski
is supported by the NSF and by the National Science Centre, Poland, based on the decision DEC-2013/09/B/ST1/04275.}}

\maketitle

\begin{abstract}
We consider $\mathbb{L}_2$-approximation of elements of a Hermite space
of analytic functions over $\mathbb{R}^s$. The Hermite space is a weighted reproducing
kernel Hilbert space of real valued functions for which the Hermite
coefficients decay exponentially fast. The weights are defined in
terms of two sequences $\boldsymbol{a} = \{a_j\}$ and $\boldsymbol{b} = \{b_j\}$ of
positive real numbers. We study the $n$th minimal worst-case error
$e(n,{\rm APP}_s; \lstd)$ of all algorithms that use $n$ information
evaluations from the class $\Lambda^{{\rm std}}$ which only allows
function evaluations to be used.

We study (uniform) exponential convergence of the $n$th minimal
worst-case error, which means that $e(n,{\rm APP}_s; \Lambda^{{\rm std}})$ converges
to zero exponentially fast with increasing $n$. Furthermore, we
consider how the error depends on the dimension~$s$. To this end,
we study the minimal number of information evaluations needed to
compute an $\varepsilon$-approximation by considering several
notions of tractability which are defined with respect to~$s$ and
$\log \varepsilon^{-1}$. We derive necessary and sufficient
conditions on the sequences $\boldsymbol{a}$ and $\boldsymbol{b}$ for obtaining
exponential error convergence, and also for obtaining the various
notions of tractability. It turns out that the conditions on the
weight sequences are almost the same as for the information class $\Lambda^{{\rm all}}$
which uses all linear functionals. The results are also
constructive as the considered algorithms are based on tensor
products of Gauss-Hermite rules for multivariate integration. The
obtained results are compared with the analogous results for
integration in the same Hermite space. This allows us to give a new 
sufficient condition for EC-weak tractability for integration.
\end{abstract}
\noindent\textbf{Keywords:} Multivariate Approximation, Exponential Convergence, Tractability, Hermite spaces\\
\noindent\textbf{2010 MSC:} 65D15, 65Y20

%%%%%%%%%%%%%%%%%%%%%%%%%%%%%%%%%%%%%%%%%%%%%%
%%%%%%%%%%%%%%%%%%%%%%%%%%%%%%%%%%%%%%%%%%%%%%

%\tableofcontents

\section{Introduction}\label{sectractability}

In this paper we study $\LL_2$-approximation of functions
belonging to a certain reproducing kernel Hilbert space
$\mathcal{H}(K_s)$ of $s$-variate functions defined on $\RR^s$
with reproducing kernel $K_s:\RR^s \times \RR^s \rightarrow \RR$.
We are interested in approximating the embedding operators
$\APP_s: \mathcal{H}(K_s)\rightarrow \LL_2(\RR^s,\varphi_s)$ with
$$\APP_s(f)=f,$$ where $\varphi_s$ denotes the density of the
$s$-dimensional standard Gaussian measure.

We consider the worst-case setting. In this case it
follows from general results on information-based complexity, see, e.g.,
\cite{TWW88} or \cite[Section~4]{NW08}, that linear algorithms are
optimal. So we approximate $\APP_s$ by a linear algorithm
$A_{n,s}$ using $n$ information evaluations either from the class
$\Lambda^{\rm{std}}$ of standard information which consists of
only function evaluations or from the class $\Lambda^{\rm{all}}$
of all continuous linear functionals. That is,
\begin{align*}%\label{eq:linalgo}
A_{n,s}(f)=\sum_{j=1}^n \alpha_j L_j(f)\qquad \mbox{for all}\qquad
f\in \mathcal{H}(K_s),
\end{align*}
where $L_j$ belongs to the dual space of $\mathcal{H}(K_s)$ , i.e., 
$L_j\in \mathcal{H}(K_s)^*$, for the class $\Lambda^{{\rm
all}}$, whereas $L_j(f)=f(\bsx_j)$ for all $f\in \mathcal{H}_s$, with 
$\bsx_j\in \RR^s$ for the class $\Lambda^{{\rm std}}$, and
$\alpha_j\in \LL_2(\RR^s,\varphi_s)$ for all $j=1,2,\dots,n$. Since
$\mathcal{H}(K_s)$ is a reproducing kernel Hilbert space we
obviously have $\lstd\subseteq\lall$. In this paper we will mainly
concentrate on the approximation problem with respect to the class
$\lstd$, because the problem for the class $\lall$ is covered by
\cite{IKPW15}.

We measure the error of an algorithm $A_{n,s}$ in terms of the
\textit{worst-case error}, which is defined as
\begin{align}\label{eq:wce}
e^{{\rm app}}(\mathcal{H}(K_s),A_{n,s}):=\sup_{\substack{f \in
\mathcal{H}(K_s) \\ \|f\|_{K_s} \le 1}}
\norm{\APP_s(f)-A_{n,s}(f)}_{\LL_2},
\end{align}
where $\norm{\cdot}_{K_s}$ denotes the norm in $\mathcal{H}(K_s)$,
and $\norm{\cdot}_{\LL_2}$ denotes the norm in $\LL_2(\RR,\varphi_s)$
which is given by $$\|g\|_{\LL_2}=\left(\int_{\RR^s} |g(\bsx)|^2
\varphi_s(\bsx) \rd \bsx\right)^{1/2}\ \ \ \mbox{ for }\ g \in
\LL_2(\RR,\varphi_s).$$ The \textit{$n$th minimal (worst-case)
error} is given by
\begin{align}\label{eq:nthmwce}
e(n,\APP_s;\Lambda):=\infp_{A_{n,s}} e^{{\rm
app}}(\mathcal{H}(K_s), A_{n,s}),
\end{align}
where the infimum is taken over all admissible algorithms
$A_{n,s}$ using information from the class
$\Lambda\in\{\lall,\lstd\}$.

For $n=0$, we consider algorithms that do not use any information
evaluation, and therefore we use $A_{0,s}\equiv 0$. The error  of
$A_{0,s}$ is called the \textit{initial (worst-case) error} and is
given by
\begin{align}\label{eq:initialerror}
e(0,\APP_s):=\sup_{\substack{f \in \mathcal{H}(K_s) \\
\|f\|_{K_s} \le 1}} \norm{\APP_s(f)}_{\LL_2}=\norm{\APP_s}.
\end{align}
%FP deleted the \Lambda in the definition of the initial error since the initial error is independent of the information class

When studying algorithms $A_{n,s}$, we do not only want to control
how their errors depend on $n$, but also how they depend on the
dimension $s$. This is of particular importance for
high-dimensional problems. To this end, we define, for
$\e\in(0,1)$ and $s\in\NN$, the {\it information complexity} by
\begin{align*}
n(\varepsilon,\APP_s;\Lambda):=
\min\left\{n\,:\,e(n,\APP_s;\Lambda)\le\varepsilon \right\}
\end{align*}
as the minimal number of information evaluations needed to obtain
an $\e$-approximation to $\APP_s$. In this case, we speak of the
\textit{absolute error criterion}. Alternatively, we can also
define the information complexity as
\begin{align*}
n(\varepsilon,\APP_s;\Lambda):=
\min\left\{n\,:\,e(n,\APP_s;\Lambda)\le\varepsilon\,
e(0,\APP_s)\right\},
\end{align*} %FP deleted the \Lambda as argument of the initial error
i.e., as the minimal number of information evaluations needed to
reduce the initial error by a factor of $\e$. In this case we
speak of the \textit{normalized error criterion}.

The specific problem considered in this paper has the convenient
property that the initial error is %(for $\Lambda^{\rm std}$ as well as for $\Lambda^{\rm all}$) are %%FP deleted <---
one, and the absolute and normalized error criteria coincide.

\subsection{Exponential convergence and tractability}

Since the particular weighted function space we are going to define in
Section~\ref{secspace} is such that its elements are infinitely many times
differentiable and even analytic, it is natural to expect that the $n$th 
minimal error converges to zero very quickly as $n$ increases. Indeed, 
we would like to achieve exponential convergence of the $n$th minimal 
errors, and we first define this type of convergence in detail.\\

By exponential convergence we mean that there exist functions
$q:\NN=\{1,2,\ldots\}\to(0,1)$ and $p,C:\NN\to (0,\infty)$ such
that
\begin{align*}
e(n,\APP_s;\Lambda)\le C(s)\, q(s)^{\,n^{\,p(s)}}\qquad \mbox{for
all}\quad s, n\in \NN.
\end{align*}
Obviously, the functions $q(\cdot)$ and $p(\cdot)$ are not
uniquely defined. For instance, we can take an arbitrary number
$q\in(0,1)$, define the function $C_1$ as
\begin{align*}
C_1(s)=\left(\frac{\log\,q}{\log\,q(s)}\right)^{1/p(s)},
\end{align*}
and then
\begin{align*}
C(s)\, q(s)^{\,n^{\,p(s)}}=C(s)\,q^{\,(n/C_1(s))^{p(s)}}.
\end{align*}
We prefer to work with the latter bound which was also considered
in~\cite{DKPW13,IKLP14,KPW14}.

\begin{defn}\rm
We say that we achieve  \emph{exponential convergence} (EXP) if
there exist a number $q\in(0,1)$ and functions $p,C,C_1:\NN\to
(0,\infty)$ such that
\begin{align}\label{exrate}
e(n,\APP_s;\Lambda)\le C(s)\, q^{\,(n/C_1(s))^{\,p(s)}}\qquad
\mbox{for all}\quad s, n\in \NN.
\end{align}
If \eqref{exrate} holds, then the largest possible rate of
exponential convergence is defined as
\begin{align*}
p^*(s)= \sup \{\,p\in (0,\infty): & \ \exists \, C, C_1:\NN\rightarrow
(0, \infty) \mbox{ such that }\\
& \ \forall n \in \NN: e(n,\APP_s;\Lambda) \le C(s)
q^{(n/C_1(s))^p} \}.
\end{align*}
\end{defn}

\begin{defn}\rm
We say that we achieve \emph{uniform exponential convergence}
(UEXP) if the function $p$ in \eqref{exrate} can be taken as a
constant function, i.e., $p(s)=p>0$ for all $s\in\NN$.
Furthermore, let
\begin{align*}
p^* = \sup \{ p \in (0, \infty): \exists \, C, C_1: & \ \NN
\rightarrow (0, \infty) \mbox{ such that }\\
& \ \forall n, s \in \NN: e(n,\APP_s;\Lambda)
\le C(s) q^{(n/C_1(s))^p} \}
\end{align*}
denote the largest rate of uniform exponential convergence.
\end{defn}

We note, see \cite{DKPW13, DLPW11}, that if \eqref{exrate} holds
and $e(0,\APP_s)=1$ then %FP deleted the \Lambda as argument of the initial error
\begin{align}\label{exrate2}
n(\e,\APP_s;\Lambda) \le \left\lceil C_1(s) \left(\frac{\log C(s)
+ \log \e^{-1}}{\log q^{-1}}\right)^{1/p(s)}\right\rceil \quad
\mbox{for all}\  s\in \NN\ \mbox{and}\ \e\in (0,1).
\end{align}
Conversely, if~\eqref{exrate2} holds then
\begin{align*}
e(n+1,\APP_s;\Lambda)\le C(s)\, q^{\,(n/C_1(s))^{\,p(s)}}\qquad
\mbox{for all}\quad s,n\in \NN.
\end{align*}
This means that \eqref{exrate} and \eqref{exrate2} are practically
equivalent. Note that $1/p(s)$ determines the power of $\log
\e^{-1}$ in the information complexity, whereas $\log q^{-1}$ only
affects the multiplier of $\log^{1/p(s)}\e^{-1}$. From this point
of view, $p(s)$ is more important than $q$.

From \eqref{exrate2} we learn that exponential convergence implies
that asymptotically, with respect to $\e$ tending to zero, we need
$\mathcal{O}(\log^{1/p(s)} \e^{-1})$ information evaluations 
to obtain an $\e$ approximation.
%C replaced (to reduce the initial error by a factor of $\e$).
However, it is not
clear how long we have to wait to see this nice asymptotic
behavior especially for large $s$. This, of course, depends on how
$C(s),C_1(s)$ and $p(s)$ depend on $s$, and this is the subject of
tractability. The following tractability notions were already
considered in \cite{DKPW13,DLPW11,IKLP14,IKPW15, KPW14,KPW14a}. 
The nomenclature was introduced in \cite{KPW14a}. In this paper 
we define $\log 0=0$ for convention.

\begin{defn}\rm
We say that we have:
\begin{enumerate}[(a)]
\item \emph{Exponential Convergence-Weak Tractability {\rm (EC-WT)}} if
\begin{align*}
\lim_{s+\e^{-1}\to\infty}\frac{\log n(\varepsilon,\APP_s;\Lambda)}{s+\log\e^{-1}}=0.
\end{align*}
\item \emph{Exponential
Convergence-Polynomial Tractability {\rm (EC-PT)}} if there exist
non-negative numbers $c,\tau_1,\tau_2$ such that
\begin{align*}
n(\varepsilon,\APP_s;\Lambda)\le
c\,s^{\,\tau_1}\,(1+\log\e^{-1})^{\,\tau_2}\qquad \mbox{for
all}\quad s\in\NN, \e\in(0,1).
\end{align*}
\item \emph{Exponential Convergence-Strong Polynomial Tractability
{\rm (EC-SPT)}} if there exist non-negative numbers $c$ and $\tau$
such that
\begin{align*}
n(\varepsilon,\APP_s;\Lambda)\le
c\,(1+\log\,\e^{-1})^{\,\tau}\qquad \mbox{for all}\quad s\in\NN,
\e\in(0,1).
\end{align*}
The exponent $\tau^*$ of {\rm EC-SPT} is defined as the infimum of
$\tau$ for which the above relation holds.
%C               ^  replaced      ^
\end{enumerate}
\end{defn}

EC-WT means that we rule out the cases for which $n(\e,\APP_s;\Lambda)$ 
depends exponentially on $s$ and $\log \e^{-1}$. 
EC-PT means that the information complexity depends at most polynomially 
on $s$ and $\log \e^{-1}$ whereas EC-SPT means that $n(\e,\APP_s;\Lambda)$ 
is bounded at most polynomially in $\log\e^{-1}$, independently of $s$.

We remark that in many papers tractability has been studied for
problems where we do not have exponential but usually polynomial
error convergence. For this kind of problems, tractability has
been defined by studying how the information complexity depends on
$s$ and $\e^{-1}$, for a detailed survey of such results we refer
to~\cite{NW08,NW10,NW12}. With the notions of EC-tractability
considered in~\cite{DKPW13, DLPW11, IKLP14, KPW14, KPW14a} and in
the present paper, however, we study how the information
complexity depends on $s$ and $\log \e^{-1}$. We remark that $\log
\e^{-1}$ also corresponds to the number of bits of desired
accuracy, cf.\ \cite{PP14}.

We collect some well-known relations:
\begin{prop} \label{propo1}
We have:
\begin{enumerate}[(i)]
\item\label{ec_lem_1} {\rm EC-SPT} $\Rightarrow$ {\rm EC-PT}
$\Rightarrow$ {\rm EC-WT}. 
\item\label{ec_lem_2}
{\rm EC-PT} (and therefore also {\rm EC-SPT}) implies {\rm UEXP}.
\item\label{ec_lem_3} {\rm EC-WT} implies that
$e(n,\APP_s;\Lambda)$ converges to zero faster than any power of
$n^{-1}$ as $n$ goes to infinity, i.e.,
$$\lim_{n \rightarrow \infty} n^{\alpha} e(n,\APP_s;\Lambda)=0
\quad \mbox{ for all }\ \alpha \in \RR^+ \ \mbox{and all}\ s \in \NN.$$
\item\label{ec_lem_4} If we have  {\rm UEXP},
$e(n,\APP_s;\Lambda)\le C(s)\, q^{(n/C_1(s))^{p}}$, then:
\begin{itemize}
\item $C(s)=\exp(\exp(o(s)))$ and $C_1(s)=\exp(o(s))$
$\Rightarrow$ {\rm EC-WT}; \item $C(s)=\exp(\cO(s^{\vartheta}))$ and
$C_1(s) =\cO(s^{\eta})$ for some $\vartheta,\eta>0$ $\Rightarrow$ {\rm EC-PT}; \item
$C(s)=\cO(1)$ and $C_1(s)=\cO(1)$  $\Rightarrow$ {\rm EC-SPT}.
\end{itemize}
\end{enumerate}
\end{prop}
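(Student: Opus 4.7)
The plan is to handle the four parts in order, exploiting the near-equivalence between the error bound \eqref{exrate} and the complexity bound \eqref{exrate2} recorded just after the definition of UEXP.

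For part (\ref{ec_lem_1}), the implication EC-SPT $\Rightarrow$ EC-PT is immediate by taking $\tau_1=0$. For EC-PT $\Rightarrow$ EC-WT, I would take logarithms in the EC-PT bound to get
\[
\log n(\e,\APP_s;\Lambda)\le \log c+\tau_1\log s+\tau_2\log(1+\log\e^{-1}),
\]
divide by $s+\log\e^{-1}$, and observe that each term tends to $0$ as $s+\log\e^{-1}\to\infty$ since $\log s=o(s)$ and $\log(1+\log\e^{-1})=o(\log\e^{-1})$.

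For part (\ref{ec_lem_2}), I would invert the EC-PT inequality. Setting $\e=e(n,\APP_s;\Lambda)$ and using the definition of $n(\e,\APP_s;\Lambda)$, the EC-PT bound gives $n\ge n(\e,\APP_s;\Lambda)/2$ or similar once we are careful about the $\lceil\cdot\rceil$; solving for $\e$ yields $(1+\log \e^{-1})^{\tau_2}\ge n/(c s^{\tau_1})$ and hence
\[
e(n,\APP_s;\Lambda)\le \ee\cdot\ee^{-(n/(c s^{\tau_1}))^{1/\tau_2}},
\]
which is of the form \eqref{exrate} with the constant exponent $p=1/\tau_2$ independent of $s$, hence UEXP. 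The case $\tau_2=0$ is trivial since the information complexity is then bounded uniformly in $\e$.

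Part (\ref{ec_lem_3}) is the core monotonicity argument. Fix $s$; EC-WT gives $\log n(\e,\APP_s;\Lambda)=o(\log\e^{-1})$ as $\e\to0$, so in particular for any $\beta>0$ and $\e$ sufficiently small, $n(\e,\APP_s;\Lambda)\le\e^{-\beta}$. Writing $\e_n=e(n,\APP_s;\Lambda)$, the nonincreasing monotonicity of $e(\cdot,\APP_s;\Lambda)$ implies $n(\e,\APP_s;\Lambda)\ge n+1$ for every $\e<\e_n$. Taking $\e=\e_n/2$ (which is eventually small, since one first checks that EC-WT forces $\e_n\to0$) gives $n+1\le 2^{\beta}\e_n^{-\beta}$, i.e.\ $\e_n=O(n^{-1/\beta})$; since $\beta$ is arbitrary, $n^{\alpha}\e_n\to0$ for every $\alpha>0$.

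For part (\ref{ec_lem_4}), I would translate the UEXP bound into the complexity estimate \eqref{exrate2} and then examine $\log n(\e,\APP_s;\Lambda)$ under each growth hypothesis on $C$ and $C_1$.
\begin{itemize}
\item If $C(s)=\exp(\exp(o(s)))$ and $C_1(s)=\exp(o(s))$, then $\log C_1(s)=o(s)$ and $\log(\log C(s)+\log\e^{-1})\le\log 2+\max(\log\log C(s),\log\log\e^{-1})=o(s+\log\e^{-1})$, so $\log n(\e,\APP_s;\Lambda)=o(s+\log\e^{-1})$, i.e.\ EC-WT.
\item If $C(s)=\exp(\cO(s^{\vartheta}))$ and $C_1(s)=\cO(s^{\eta})$, bounding $\log C(s)+\log\e^{-1}\le \cO(s^{\vartheta})(1+\log\e^{-1})$ and plugging into \eqref{exrate2} gives a bound of the form $n(\e,\APP_s;\Lambda)\le \cO(s^{\eta+\vartheta/p}(1+\log\e^{-1})^{1/p})$, which is EC-PT.
\item If $C(s)=\cO(1)$ and $C_1(s)=\cO(1)$, the same computation yields $n(\e,\APP_s;\Lambda)=\cO((1+\log\e^{-1})^{1/p})$, which is EC-SPT.
\end{itemize}

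The main technical nuisance will be in part (\ref{ec_lem_4}) for EC-WT: one must split into cases according to which of $\log C(s)$ and $\log\e^{-1}$ dominates inside $\log(\log C(s)+\log\e^{-1})$, and verify that in both regimes the quotient with $s+\log\e^{-1}$ tends to $0$. Everything else is either direct from the definitions or a straightforward inversion of the error/complexity equivalence \eqref{exrate}\,$\Leftrightarrow$\,\eqref{exrate2}.
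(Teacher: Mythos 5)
Your proposal is correct in substance, but it is worth noting that the paper itself does not prove this proposition: it declares~{\it (i)} clear and cites \cite{DKPW13,KPW14a} for {\it (ii)}--{\it (iv)}. So you are supplying the (standard) arguments from those references rather than paralleling anything in this paper. Parts {\it (i)}, {\it (iii)} and {\it (iv)} are fine as sketched: in {\it (i)} and {\it (iv)} the only care needed is that the limit in EC-WT is taken as $s+\e^{-1}\to\infty$ (not $s+\log\e^{-1}\to\infty$), but since that forces $s\to\infty$ or $\e\to 0$ along any subsequence, the denominator $s+\log\e^{-1}$ still diverges and your case split goes through; and in {\it (iii)} your chain $\e<\e_n\Rightarrow n(\e)\ge n+1$ combined with $n(\e)\le\e^{-\beta}$ for small $\beta$ is exactly the right monotonicity argument.

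The one step that is wrong as written is the inversion in part {\it (\ref{ec_lem_2})}. Setting $\e=e(n,\APP_s;\Lambda)$ gives $n(\e,\APP_s;\Lambda)\le n$, and combining this with the EC-PT \emph{upper} bound $n(\e,\APP_s;\Lambda)\le c\,s^{\tau_1}(1+\log\e^{-1})^{\tau_2}$ yields two upper bounds on the same quantity; you cannot deduce $(1+\log\e^{-1})^{\tau_2}\ge n/(c\,s^{\tau_1})$ from them. The inequality you need points the other way: take any $\e<e(n,\APP_s;\Lambda)$, so that by monotonicity $e(m,\APP_s;\Lambda)>\e$ for all $m\le n$ and hence $n<n(\e,\APP_s;\Lambda)\le c\,s^{\tau_1}(1+\log\e^{-1})^{\tau_2}$; letting $\e\uparrow e(n,\APP_s;\Lambda)$ then gives $(1+\log (e(n,\APP_s;\Lambda))^{-1})^{\tau_2}\ge n/(c\,s^{\tau_1})$ and therefore $e(n,\APP_s;\Lambda)\le \ee\cdot\ee^{-(n/(c s^{\tau_1}))^{1/\tau_2}}$, which is \eqref{exrate} with $q=\ee^{-1}$, $C(s)=\ee$, $C_1(s)=c\,s^{\tau_1}$ and constant $p=1/\tau_2$ (the case $n\le c\,s^{\tau_1}$ being absorbed since the right-hand side is then at least $1=e(0,\APP_s)$). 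With that repair the argument for UEXP is complete.
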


\begin{proof}
{\it (\ref{ec_lem_1})} is clear.  A proof of {\it (\ref{ec_lem_2})} can be
found in \cite{DKPW13,KPW14a} and {\it (\ref{ec_lem_3})} and
{\it (\ref{ec_lem_4})} are shown in \cite{KPW14a}.
\end{proof}

Of course Point {\it (\ref{ec_lem_2})} of Proposition \ref{propo1} is the motivation for the
use of the prefix EC (exponential convergence) in our notation.

The goal of this paper is to find relations between the concepts EXP, UEXP,
and the various tractability notions, as well as necessary and
sufficient conditions on the weights of the considered function
space for which these concepts hold, mostly for the class $\lstd$.

\subsection{Hermite spaces with infinite smoothness}\label{secspace}

We briefly summarize some facts on \textit{Hermite polynomials};
for further details, we refer to~\cite{IL} and the references
therein. For $k \in \NN_0=\{0,1,2,\ldots\}$ the $k$th Hermite
polynomial is given by
\begin{align*}
H_k(x)=\frac{(-1)^k}{\sqrt{k!}} \exp(x^2/2) \frac{\rd^k}{\rd x^k}
\exp(-x^2/2),
\end{align*}
which is sometimes also called normalized probabilistic Hermite
polynomial. Here we follow the definition given in \cite{B98}, but
we remark that there are slightly different ways to introduce
Hermite polynomials, see, e.g., \cite{szeg}. For $s \ge 2$,
$\bsk=(k_1,\ldots,k_s)\in \NN_0^s$ and $\bsx=(x_1,\ldots,x_s)\in
\RR^s$, we define $s$-dimensional Hermite polynomials by
\begin{align*}
H_{\bsk}(\bsx)=\prod_{j=1}^s H_{k_j}(x_j).
\end{align*}
It is well known, see again~\cite{B98}, that the sequence of
Hermite polynomials $\{H_{\bsk}\}_{\bsk \in \NN_0^s}$ forms an
orthonormal basis of the function space $\LL_2(\RR^s,\varphi_s)$,
where $\varphi_s$ denotes the density of the $s$-dimensional
standard Gaussian measure,
\begin{align*}
\varphi_s(\bsx)=\frac{1}{(2 \pi)^{s/2}} \exp\left(-\frac{1}{2}\,
\bsx \cdot \bsx\right)\quad\textnormal{for all }\bsx\in\RR^s,
\end{align*}
where ``$\cdot$'' is the standard Euclidean inner product in
$\RR^s$. We write $\varphi:=\varphi_1$.

Similarly to what has been done in~\cite{IL}, we are now going to
define function spaces based on Hermite polynomials. These spaces
are Hilbert spaces with a \textit{reproducing kernel}. For details
on reproducing kernel Hilbert spaces, we refer to~\cite{A50}.

Let $r: \NN_0^s \rightarrow \RR^+$ be a summable function, i.e.,
$\sum_{\bsk\in\NN_0^s} r(\bsk) < \infty$. Define a kernel function
\begin{align*}
K_{r}(\bsx,\bsy)=\sum_{\bsk \in \NN_0^s} r(\bsk) H_{\bsk}(\bsx)
H_{\bsk}(\bsy)\ \ \ \ \mbox{ for }\ \ \bsx,\bsy \in \RR^s
\end{align*}
and an inner product
\begin{align*}
\langle f,g\rangle_{K_{r}} =\sum_{\bsk \in \NN_0^s}
\frac{1}{r(\bsk)} \widehat{f}(\bsk) \widehat{g}(\bsk),
\end{align*}
where 
\begin{align*}
\widehat{f}(\bsk)=\int_{\RR^s} f(\bsx) H_{\bsk}(\bsx)
\varphi_s(\bsx)\rd \bsx
\end{align*}
is the $\bsk$th \textit{Hermite
coefficient} of $f$. Note that $K_r(\bsx,\bsy)$ is well defined
for all $\bsx,\bsy \in \RR^s$, since
\begin{align*}
|K_r(\bsx,\bsy)| \le \sum_{\bsk}  r(\bsk) |H_{\bsk}(\bsx)| \,
|H_{\bsk}(\bsy)| \le \frac{1}{\sqrt{\varphi_s(\bsx)
\varphi_s(\bsy)}}\sum_{\bsk}  r(\bsk)< \infty,
\end{align*}
since Cramer's bound for Hermite polynomials, see, e.g., \cite[p.
324]{sansone}, states that
\begin{align*}
\vert H_{k}(x)\vert\leq
\frac{1}{\sqrt{\varphi(x)}}\quad\textnormal{ for all } k\in\NN_0.
\end{align*}

Let $\mathcal{H}(K_r)$ be the reproducing kernel Hilbert space
corresponding to $K_{r}$, which we will call a Hermite space.
The norm in $\mathcal{H}(K_r)$ is given by $\| f
\|_{K_{r}}^2 =\langle f , f \rangle_{K_{r}}$. From this we see
that the functions in $\mathcal{H}(K_r)$ are characterized by the
decay rate of their Hermite coefficients, which is regulated by
the function $r$. Roughly speaking, the faster $r$ decreases as
$\bsk$ grows, the faster the Hermite coefficients of the elements
of $\mathcal{H}(K_r)$ decrease. In \cite{IL}, the case of
polynomially decreasing $r$ as well as exponentially decreasing
$r$ was considered. In \cite{IKLP14} further results were obtained 
for numerical integration for exponentially decreasing $r$, and 
in \cite{IKPW15} for approximation using information from 
$\Lambda^{\rm all}$. In this paper, we continue the work on
exponentially decreasing $r$ for approximation using information from 
$\Lambda^{\rm std}$, thereby extending the results of
\cite{IKLP14,IKPW15,IL}.

To define our function $r$, we first introduce two weight
sequences of positive real numbers, $\bsa=\{a_j\}$ and
$\bsb=\{b_j\}$ such that
\begin{align*}
0 < a_1\leq a_2\leq \cdots\ \ \ \mbox{ and }\ \ \ b_{\ast}:=\inf_j
b_j\ge 1.
\end{align*}

Furthermore, we fix a parameter $\omega\in (0,1)$. For a vector
$\bsk=(k_1,\ldots,k_s)\in\NN_0^s$, we consider
\begin{align*}
r(\bsk)=\omega_{\bsk}:=\omega^{\sum_{j=1}^s a_j k_j^{b_j}}.
\end{align*}
For simplicity we assume without loss of generality that $a_1\geq 1$, because we can 
always modify $\omega$ in such a way that $a_1$ is greater than or equal 
to $1$. 

We modify the notation for the kernel function to
\begin{align*}
K_r(\bsx,\bsy)=K_{s,\bsa,\bsb,\omega} (\bsx,\bsy)=\sum_{\bsk\in\NN_0^s}
\omega_{\bsk} H_{\bsk}(\bsx) H_{\bsk} (\bsy).
\end{align*}
From now on, we deal with the corresponding reproducing kernel
Hilbert space $\calH(K_{s,\bsa,\bsb,\omega})$. Our concrete choice
of $r$ now decreases exponentially fast as $\bsk$ grows, which
influences the smoothness of the elements in
$\cH(K_{s,\bsa,\bsb,\omega})$. Indeed, if $b_{\ast} \ge 1$ it can
be shown that functions $f \in \cH(K_{s,\bsa,\bsb,\omega})$ are
analytic, see \cite{IKLP14}. More precisely, we
have that for all $\bsx\in \RR^s$ the Taylor expansion of $f$
centered at $\bsx$  converges in a ball with radius
$\rho(\omega)>0$ around $\bsx$. It can also be shown that this radius $\rho(\omega)$ is
independent of $\bsx$ and $\lim_{\omega \rightarrow 0}
\rho(\omega)=\infty$ and $\lim_{\omega \rightarrow 1}
\rho(\omega)=0$.

\begin{rem}\rm
Apparently the assumption $b_{\ast}\ge 1$ has technical reasons,
see, e.g., the footnote on page~\pageref{bjgr1}, and pages~\pageref{bjgr1(2)} 
and \pageref{bjgr1(3)}. However, the assumption $b_{\ast}\ge 1$ is also 
essential in showing that functions $f \in \cH(K_{s,\bsa,\bsb,\omega})$ are
analytic, see \cite{IKLP14}. For the moment it must remain an open question 
whether our results are also correct if $b_{\ast} \in (0,1)$. However, in the
case $b_{\ast} \in (0,1)$, we can show that functions $f \in \cH(K_{s,\bsa,\bsb,\omega})$ 
belong to the Gevrey class of index $1/b_*$, which is work in progress.
\end{rem}

We remark that reproducing kernel Hilbert spaces of a similar
flavor were previously studied
in~\cite{DKPW13,DLPW11,KPW14,KPW14a}, but the functions considered
there were one-periodic functions defined on the unit cube
$[0,1]^s$. Here, we study functions which are defined on the
$\RR^s$, which is a major difference. Obviously,
$\cH(K_{s,\bsa,\bsb,\omega})$ contains all polynomials on the
$\RR^s$, but there are further functions of practical interest
which belong to such spaces.  For example, it is easy to verify,
see again~\cite{IKLP14}, that $f(\bsx)=\exp(\bslambda \cdot \bsx)$
is an element of the Hilbert space $\cH(K_{s,\bsa,\bsone,\omega})$
for any weight sequence $\bsa$ and any
$\bslambda \in \RR^s$. Functions of a similar form occur in
problems of financial derivative pricing, see, e.g., \cite{ll}.

Multivariate integration in $\cH(K_{s,\bsa,\bsb,\omega})$ has been
studied in \cite{IKLP14} and will be discussed further in Section~\ref{secint} of this paper.

\section{$\LL_2$-approximation in $\calH(K_{s,\bsa,\bsb,\omega})$}

Let $\APP_s: \calH (K_{s,\bsa,\bsb,\omega})\To \LL_2
(\RR^s,\varphi_s)$ with $\APP_s(f)=f$. In order to approximate
$\APP_s$ in the norm $\| \cdot \|_{\LL_2}$  we use linear algorithms
$A_{n,s}$, which use $n$ information evaluations and which are of
the form
\begin{align*}
A_{n,s}(f)=\sum_{k=1}^n \alpha_k L_k (f)\quad \mbox{for}\
f\in\calH (K_{s,\bsa,\bsb,\omega}),
\end{align*}
where each $\alpha_k$ is a function from $\LL_2 (\RR^s,\varphi_s)$
and each $L_k$ is a continuous linear functional defined on $\calH
(K_{s,\bsa,\bsb,\omega})$ from a permissible class
$\Lambda\in\{\lall,\lstd\}$ of information.

The worst-case error $e^{\mathrm{app}}$ of an algorithm $A_{n,s}$
is defined as in \eqref{eq:wce}
and the $n$th minimal worst-case error for the information class
$\Lambda$ is given by \eqref{eq:nthmwce}.
The initial error, defined by~\eqref{eq:initialerror}, is
\begin{align*}%FP deleted the \Lambda as argument of the initial error
e(0,\APP_s)=\norm{\APP_s}= \sup_{\substack{f\in \calH
(K_{s,\bsa,\bsb,\omega})\\ \norm{f}_{K_{s,\bsa,\bsb,\omega}}\le
1}} \norm{f}_{\LL_2}= \sup_{\substack{f\in \calH
(K_{s,\bsa,\bsb,\omega})\\ \norm{f}_{K_{s,\bsa,\bsb,\omega}}\le
1}} \norm{f}_{K_{s,\bsa,\bsb,\omega}}=1,
\end{align*}
since we always have $\norm{f}_{K_{s,\bsa,\bsb,\omega}} \ge
\norm{f}_{\LL_2}$ and equality is obtained for the constant function~
$1$ which certainly belongs to $\calH (K_{s,\bsa,\bsb,\omega})$.
This means that the approximation problem is well normalized and
that the absolute and the normalized error criteria coincide,
i.e., the {\it information complexity} is
\begin{align*}
  n(\varepsilon,\APP_s;\Lambda):= \min\left\{n\,:\,e(n,\APP_s;\Lambda)\le\varepsilon \right\}.
\end{align*}

\subsection{Results for $\LL_2$-approximation for the class $\Lambda^{{\rm all}}$}

$\LL_2$-approximation for the class $\lall$ defined over very general
Hilbert spaces with exponential weights is discussed in \cite{IKPW15}.
Since the Hermite space $\mathcal{H}(K_{s,\bsa,\bsb,\omega})$ with 
weight sequences $\bsa$ and $\bsb$ fits into the setting of \cite{IKPW15}, we know
that the following results hold for the class $\lall$:
\begin{enumerate}
    \item EXP holds for arbitrary $\bsa$ and $\bsb$ and $p^{*}(s)=1/B(s)$ with $B(s):=\sum_{j=1}^s b_j^{-1}$.
    \item UEXP holds iff $\bsa$ is an arbitrary sequence and $\bsb$ such that $B:=\sum_{j=1}^\infty\frac1{b_j}<\infty$. If so then $p^*=1/B$.
    \item We have
\begin{align*}
\textnormal{EC-WT}\qquad &\Leftrightarrow\qquad \lim_{j\to\infty}a_j=\infty,\\
\textnormal{EC-PT}\qquad &\Leftrightarrow\qquad
B:=\sum_{j=1}^\infty\frac1{b_j}<\infty\quad\textnormal{and}\quad \alpha^*:=\liminf_{j\to\infty}\frac{\log\,a_j}j>0,\\
\textnormal{EC-SPT}\qquad &\Leftrightarrow\qquad
B:=\sum_{j=1}^\infty\frac1{b_j}<\infty\quad\textnormal{and}\quad
\alpha^*:=\liminf_{j\to\infty}\frac{\log\,a_j}j>0.
\end{align*}
Then the exponent $\tau^*$ of {\rm EC-SPT} satisfies
$\max\left(B,\frac{\log\,2}{\alpha^*}\right)\le \tau^*\le
B+\frac{\log\,2}{\alpha^*}$. In particular, we have $\textnormal{EC-PT} \Leftrightarrow \textnormal{EC-SPT}$.
\item The following notions are
equivalent:
\begin{align*}
\textnormal{EC-PT}&, \ \textnormal{EC-PT+EXP}, \ \textnormal{EC-PT+UEXP},\\
&\textnormal{EC-SPT}, \
\textnormal{EC-SPT+EXP}, \ \textnormal{EC-SPT+UEXP}.
\end{align*}
\end{enumerate}

\subsection{Results for $\LL_2$-approximation for the class $\Lambda^{{\rm std}}$}

We present the main results of this paper in the following theorem:

\begin{thm}\label{th:result}
Consider $\LL_2$-approximation defined over the Hermite space
$\mathcal{H}(K_{s,\bsa,\bsb,\omega})$ with weight sequences $\bsa$
and $\bsb$ satisfying $0 < a_1 \le a_2 \le a_3 \le \ldots$ and
$\inf_j b_j \ge 1$. The following results hold for the class
$\Lambda^{\rm{std}}$.
\begin{enumerate}
\item\label{exp} {\rm EXP} holds for arbitrary $\bsa$ and $\bsb$
and
\begin{align*}
p^{*}(s)=\frac{1}{B(s)} \qquad \textnormal{with}\qquad
B(s):=\sum_{j=1}^s\frac1{b_j}.
\end{align*}

\item\label{uexp} {\rm UEXP} holds iff $\bsa$ is an arbitrary
sequence and $\bsb$ is such that
\begin{align*}
B:=\sum_{j=1}^\infty\frac1{b_j}<\infty.
\end{align*}
If this is the case then $p^*=1/B$.

\item\label{tract} We have
\begin{enumerate}
    \item[a.]\label{wt} {\rm EC-WT} iff $\lim_{j\to\infty}a_j=\infty$,
    \item[b.]\label{pt} {\rm EC-PT} iff  {\rm EC-SPT},
    \item[c.]\label{spt} {\rm EC-SPT} iff $B:=
    \sum_{j=1}^\infty\frac1{b_j}<\infty$ and $\alpha^*:=\liminf_{j\to\infty}\frac{\log\,a_j}j>0$.\\[0.2cm]
Then the exponent $\tau^*$ of {\rm EC-SPT} satisfies
\begin{align*}
\max\left(B,\frac{\log\,2}{\alpha^*}\right)\le \tau^*\le
B+\frac{\log\,3}{\alpha^*}.
\end{align*}
In particular, $\alpha^*=\infty$ implies $\tau^*=B$.
\end{enumerate}

\end{enumerate}
\end{thm}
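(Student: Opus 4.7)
My plan is to organize the proof around the standard dichotomy of necessity and sufficiency, and to lean heavily on the fact that the analogous theorem for $\Lambda^{\mathrm{all}}$ is already established in the preceding subsection.

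\medskip
\noindent\textbf{Necessary conditions (lower bounds).} Since $\Lambda^{\mathrm{std}} \subseteq \Lambda^{\mathrm{all}}$, we have $e(n,\APP_s;\Lambda^{\mathrm{all}}) \le e(n,\APP_s;\Lambda^{\mathrm{std}})$ for all $n, s$. This is essentially free: every negative/lower-bound statement for $\Lambda^{\mathrm{all}}$ transfers verbatim to $\Lambda^{\mathrm{std}}$. In particular, the upper bound $p^*(s) \le 1/B(s)$ in (\ref{exp}), the necessity of $\sum_j 1/b_j < \infty$ and the upper bound $p^* \le 1/B$ in (\ref{uexp}), the necessity of $a_j \to \infty$ for EC-WT, the necessity of both $B<\infty$ and $\alpha^*>0$ for EC-SPT, and the lower bound $\tau^* \ge \max(B,\log 2/\alpha^*)$ all transfer directly. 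Thus only the positive direction, i.e.\ construction of algorithms in $\Lambda^{\mathrm{std}}$ matching the $\Lambda^{\mathrm{all}}$ rates, needs work.

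\medskip
\noindent\textbf{Sufficient conditions via Gauss--Hermite tensor products.} For the upper bounds I would use the explicit family of algorithms indicated in the abstract: truncate the Hermite expansion at a finite index set $I_s \subset \NN_0^s$ and approximate the Hermite coefficients of $f$ by a tensor product Gauss--Hermite rule $Q_{\bsm,s}$ with $\bsm=(m_1,\dots,m_s)$ nodes, giving
\begin{align*}
A_{n,s}(f) \;=\; \sum_{\bsk \in I_s} Q_{\bsm,s}(f\, H_{\bsk})\, H_{\bsk},
\qquad n = \prod_{j=1}^s m_j,
\end{align*}
where the same $n$ nodes serve all coefficients simultaneously. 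Writing $f = \sum_{\bsl} \widehat f(\bsl) H_{\bsl}$ and using orthogonality in $\LL_2(\RR^s,\varphi_s)$, the squared worst-case error decomposes as a truncation part $\sup_{\|f\|\le 1}\sum_{\bsk\notin I_s} |\widehat f(\bsk)|^2$ and a quadrature part $\sup_{\|f\|\le 1}\sum_{\bsk\in I_s} |Q_{\bsm,s}(fH_{\bsk}) - \widehat f(\bsk)|^2$. For the truncation part, Cauchy--Schwarz together with the definition of the norm gives a bound of the form $\sup_{\bsk\notin I_s} \omega_{\bsk}$. For the quadrature part, one exploits the exactness property that one-dimensional Gauss--Hermite with $m$ nodes integrates $H_k H_l \varphi$ exactly whenever $k+l \le 2m-1$; tensorizing, only indices $\bsl$ for which some component satisfies $l_j + k_j \ge 2m_j$ contribute, and on these the bound of $|Q_{\bsm,s}(H_{\bsl}H_{\bsk})|$ (via Cramer's bound for $H_{\bsk}$ at the Gauss--Hermite nodes) can be combined with the fast decay of $\widehat f(\bsl)$ controlled by $\omega_{\bsl}$.

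\medskip
\noindent\textbf{Parameter optimization.} For the box-type truncation set $I_s = \prod_{j=1}^s \{0,1,\ldots,K_j-1\}$ and quadrature $m_j$ tied to $K_j$ (e.g.\ $m_j = c K_j$ for a small integer $c$), the budget is $n \asymp \prod_j K_j$ while the two error parts are controlled by the tail size of $\omega^{a_j k_j^{b_j}}$. Choosing the $K_j$ (hence $m_j$) to balance the two contributions and then solving in $n$ should yield, for the EXP part, $e(n,\APP_s;\Lambda^{\mathrm{std}}) \le C(s) q^{(n/C_1(s))^{1/B(s)}}$, matching the lower bound from $\Lambda^{\mathrm{all}}$ and hence proving $p^*(s) = 1/B(s)$ and $p^* = 1/B$. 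The same optimization, done uniformly in $s$ under the assumption $B<\infty$ and $\alpha^*>0$, yields EC-SPT with an exponent $\tau^* \le B + \log 3/\alpha^*$; the extra factor $\log 3$ (against $\log 2$ for $\Lambda^{\mathrm{all}}$) reflects the minor loss incurred by replacing exact coefficients with Gauss--Hermite approximations. The remaining tractability statements (EC-WT under $a_j \to \infty$, and EC-PT $\Leftrightarrow$ EC-SPT) then follow from Proposition~\ref{propo1} together with the already-established lower bounds.

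\medskip
\noindent\textbf{Main obstacle.} The routine pieces are the truncation bound and the global bookkeeping. The technically delicate step is the bound on the quadrature error $|Q_{\bsm,s}(fH_{\bsk}) - \widehat f(\bsk)|$: here one must control the Gauss--Hermite functionals applied to products of high-degree Hermite polynomials, use the assumption $b_* \ge 1$ (this is where the footnote warnings enter) to guarantee that the extra factors blowing up like Cramer's bound $1/\sqrt{\varphi(\cdot)}$ are absorbed by the exponential decay $\omega_{\bsl}$, and do so uniformly enough that the final bound is summable over $\bsk \in I_s$ without losing the correct exponent $1/B(s)$ or the correct constant in $\tau^*$. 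Matching the lower bound on $p^*(s)$ exactly, rather than losing a dimension-dependent factor, is the sharpest point of the argument.
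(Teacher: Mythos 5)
Your overall architecture (transfer all lower bounds from $\Lambda^{\rm all}$, prove upper bounds via tensor-product Gauss--Hermite rules applied to $f\,H_{\bsk}$ over a truncated index set) is the same as the paper's for Points \ref{exp}, \ref{uexp} and the EC-SPT sufficiency, and the necessity arguments are exactly right. But there is one genuine gap and one unresolved core step.

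The gap is the sufficiency of $\lim_j a_j=\infty$ for EC-WT. You assert this ``follows from Proposition~\ref{propo1} together with the already-established lower bounds,'' but Proposition~\ref{propo1} only gives implications \emph{downward} from EC-PT, and under the mere hypothesis $a_j\to\infty$ (without $B<\infty$ or $\alpha^*>0$) you do not have EC-PT, so there is nothing to descend from. Moreover, the Gauss--Hermite construction cannot supply this case: its error bound carries factors like $2^s$ and $8^s\prod_j(1+\log^{-1/b_j}\omega^{-1})^2$ and an exponent $M^{2B(s)+K}$, and when, say, $b_j\equiv 1$ the resulting cost satisfies $\log n \asymp \sum_{j\le s}\log^+\bigl(cB(s)\log\e^{-1}/a_j\bigr)$, which is not $o(s+\log\e^{-1})$ for slowly growing $a_j$. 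The paper must switch methods entirely here: it bounds the ordered eigenvalues of $W_s=\APP_s^*\APP_s$ by $\lambda_{s,n}\le M_{s,\tau}^2 n^{-2\tau}$ with $\log M_{s,\tau}=o(s)$ (which is where $a_j\to\infty$ enters) and invokes the semi-constructive standard-information algorithm of \cite[Theorem~26.18]{NW12} to convert eigenvalue decay into an $\lstd$ error bound. Without some such device your proof of Point 3a is missing.

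The unresolved step is the quadrature-error lemma itself. The paper does not use Cramer's bound at the Gauss--Hermite nodes as you suggest (those $1/\sqrt{\varphi(x_i)}$ factors grow with the nodes and are hard to absorb); instead it linearizes the product $H_{h_j}H_{v_j}$ via Carlitz's formula into $\sum_r c_r H_{|h_j-v_j|+2r}$, applies Cauchy--Schwarz to split the error into a term $\Theta_1$ controlled by $\|f\|_{K_{s,\bsa,\bsb,\omega}}$ and a combinatorial term $\Theta_2$, and tames the factorial coefficients in $\Theta_2$ using $b_j\ge1$ and a carefully chosen power $k(\omega)$. Two further points you would still need for the sharp $\tau^*\le B+\log 3/\alpha^*$: (i) the truncation set must be the weighted level set $\{\bsh:\omega_{\bsh}^{-1}<M\}$ (or at least its cardinality must be controlled as in \cite[Lemma~1]{IKPW15}), and (ii) the $2^s$ multiplicity factor in $\Theta_1$ must be eliminated, which the paper does by taking the $m_j$ odd so that $\bsv\mapsto\bsh\oplus_{\bsr}\bsv$ is injective on the relevant range; with the $2^s$ left in, the EC-SPT bound is not uniform in $s$.
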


The results we achieve for the information class $\lstd$ match
those for the class $\lall$, although the upper bound on the exponent 
of EC-SPT is slightly different. From Theorem \ref{th:result} we 
see once more that EC-PT implies UEXP, cf.\ Proposition~\ref{propo1}.

We cannot determine the exponent of EC-SPT exactly but we get an
upper and a lower bound such that we know
$\tau^*\in[\max\left(B,(\log 2)/\alpha^*\right),B+(\log 3)/\alpha^*]$.\\

The proof of Theorem~\ref{th:result} will be given in Section~\ref{prThm1}. First we collect some auxiliary results
in the following section.

\subsection{Auxiliary results for the proof of Theorem \ref{th:result}}\label{applambdastd}

\subsubsection{Gauss-Hermite rules}

A one-dimensional {\it Gauss-Hermite rule of order $n$} is a
linear integration rule $Q_{n}$ of the form
\begin{align*}
Q_n(f)=\sum_{i=1}^n \alpha_i f(x_i)
\end{align*}
that is exact for all polynomials $p$ of degree less than $2n$,
\begin{align*}
\int_{\RR} p(x) \varphi(x) \rd x=\sum_{i=1}^n \alpha_i p(x_i).
\end{align*}
The nodes
$x_1,\ldots,x_n \in \RR$ are the zeros of the $n$th
Hermite polynomial $H_n$ and the integration weights $\alpha_i$ are given by
\begin{align*}
\alpha_i=\frac{1}{n H_{n-1}^2(x_i)}\qquad \textnormal{ for
$i=1,2,\ldots,n$},
\end{align*}
see \cite{hildebrand}. We stress that the weights $\alpha_i$ are
all positive. The following lemma summarizes a few basic facts on
Gauss-Hermite rules.
\begin{lem}\label{leGH}
Let $n \in \NN$. Then we have:
\begin{enumerate}
\item $Q_{n}(H_0)=\sum_{i=1}^n \alpha_i=1$; \item for $k \in
\{1,2,\ldots,2n-1\}$ we have $Q_n(H_k)=0$; \item for $k \in \{2n,
2n+1,\ldots \}$ we have
\begin{align*}
|Q_n(H_k)| \leq\begin{cases}\sqrt[4]{8 \pi} & \mbox{ if $k$ is
even},\\0 & \mbox{ if $k$ is odd}. \end{cases}
\end{align*}
\end{enumerate}
\end{lem}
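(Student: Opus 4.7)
My plan is to treat the three items separately, with the real work concentrated in item~3.

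For items~1 and~2 I would simply invoke the exactness of $Q_n$ on polynomials of degree $<2n$. Since $H_0\equiv 1$ is a polynomial of degree $0$, $Q_n(H_0)=\int_{\RR}\varphi(x)\rd x=1$, and this simultaneously reads $\sum_{i=1}^n\alpha_i=1$. For $1\le k\le 2n-1$, $H_k$ is a polynomial of degree $<2n$, hence $Q_n(H_k)=\int_{\RR}H_k(x)\varphi(x)\rd x=\langle H_k,H_0\rangle_{\LL_2(\varphi)}$, which vanishes by orthonormality of the Hermite basis.

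For the odd case of item~3 I would argue by symmetry. The identity $H_n(-x)=(-1)^n H_n(x)$ shows that the zero set of $H_n$ is invariant under $x\mapsto-x$, with $0$ belonging to it precisely when $n$ is odd, and the explicit formula $\alpha_i=1/(n H_{n-1}(x_i)^2)$ together with $H_{n-1}(-x)^2=H_{n-1}(x)^2$ forces paired nodes to carry equal weights. For odd $k$ one has $H_k(-x)=-H_k(x)$ and $H_k(0)=0$, so every pair's contribution to $Q_n(H_k)$ cancels, yielding $Q_n(H_k)=0$.

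The even case of item~3 is the main obstacle. The idea is to combine Cramer's bound $|H_k(x)|\le 1/\sqrt{\varphi(x)}=(2\pi)^{1/4}e^{x^2/4}$ with positivity of the Gauss weights (apply $Q_n$ to the nonnegative functions $(2\pi)^{1/4}e^{x^2/4}\pm H_k$) to obtain
\begin{align*}
|Q_n(H_k)|\le (2\pi)^{1/4}\,Q_n\!\bigl(e^{x^2/4}\bigr).
\end{align*}
Since $\int_{\RR}e^{x^2/4}\varphi(x)\rd x=\sqrt{2}$ and $(2\pi)^{1/4}\sqrt{2}=\sqrt[4]{8\pi}$, everything reduces to showing $Q_n(e^{x^2/4})\le\sqrt{2}$. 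I would expand $e^{x^2/4}=\sum_{m\ge 0}x^{2m}/(4^m m!)$ (all coefficients positive, so interchanging with the finite sum $Q_n$ is trivially justified) and apply the classical Gauss-quadrature remainder
\begin{align*}
\int_{\RR}f(x)\varphi(x)\rd x-Q_n(f)=\frac{n!}{(2n)!}\,f^{(2n)}(\xi)
\end{align*}
to each monomial $f(x)=x^{2m}$: for $m<n$ this is zero by exactness, and for $m\ge n$ the derivative $(x^{2m})^{(2n)}$ is a nonnegative monomial, so the remainder is nonnegative and $Q_n(x^{2m})\le(2m-1)!!$. Termwise summation then gives $Q_n(e^{x^2/4})\le\int_{\RR}e^{x^2/4}\varphi(x)\rd x=\sqrt{2}$, which finishes the proof. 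The delicate step is correctly identifying the sign of the quadrature remainder on the high-degree monomials; everything else is bookkeeping.
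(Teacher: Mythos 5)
Your proof is correct. Note that the paper itself does not prove this lemma but simply cites the proof of Proposition~1 in \cite{IKLP14}, so there is no in-paper argument to compare against line by line; your write-up is a legitimate self-contained substitute. Items~1 and~2 are exactly the expected exactness-plus-orthonormality argument, and the parity argument for odd $k$ (symmetric nodes, weights depending on $x_i$ only through $H_{n-1}(x_i)^2$, oddness of $H_k$) is sound. For even $k$, the reduction via Cramer's bound and positivity of the weights to the single inequality $Q_n(e^{x^2/4})\le\int_{\RR}e^{x^2/4}\varphi(x)\rd x=\sqrt{2}$ is the right move, and your termwise treatment of $x^{2m}$ does close the gap: the constants check out, since $Q_n(x^{2m})\le(2m-1)!!=\binom{2m}{m}\,2^m m!/ (2^m m!)$ and $\sum_m\binom{2m}{m}8^{-m}=\sqrt2$, giving $(2\pi)^{1/4}\sqrt2=\sqrt[4]{8\pi}$. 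The one point I would tighten is your appeal to the mean-value form of the Gauss remainder on the unbounded interval $\RR$: rather than invoking $\tfrac{n!}{(2n)!}f^{(2n)}(\xi)$ literally, it is cleaner (and fully rigorous) to say that for $f(x)=x^{2m}$ with $f^{(2n)}\ge0$ the osculating (Hermite) interpolant $H_{2n-1}$ at the doubled nodes satisfies $f\ge H_{2n-1}$ pointwise because the relevant divided difference $f[x_1,x_1,\dots,x_n,x_n,x]$ is nonnegative; then $\int f\varphi\ge\int H_{2n-1}\varphi=Q_n(H_{2n-1})=Q_n(f)$. This gives exactly the sign information you need without worrying about whether the mean-value statement transfers to $\RR$. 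With that cosmetic repair the argument is complete.
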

\begin{proof}
See \cite[Proof of Proposition~1]{IKLP14}.
\end{proof}

For integration in the multivariate case, we use the tensor
product of one-dimensional Gauss-Hermite rules. Let
$m_1,\ldots,m_s \in \NN$ and let $n=m_1 m_2\cdots m_s$. For
$j=1,2,\ldots,s$ let $$Q_{m_j}^{(j)}(f)=\sum_{i=1}^{m_j}
\alpha_i^{(j)} f(x_i^{(j)})$$ be one-dimensional Gauss-Hermite
rules of order $m_j$ with nodes $x_1^{(j)},\ldots,x_{m_j}^{(j)}$
and with weights $\alpha_1^{(j)},\ldots,\alpha_{m_j}^{(j)}$,
respectively. Then we apply the $s$-dimensional tensor product
rule $$Q_{n,s}=Q_{m_1}^{(1)} \otimes \cdots \otimes
Q_{m_s}^{(s)},$$ i.e.,
\begin{equation}\label{cpghr}
Q_{n,s}(f)=\sum_{i_1=1}^{m_1}\ldots \sum_{i_s=1}^{m_s} \alpha_{i_1}^{(1)}
\cdots \alpha_{i_s}^{(s)} f(x_{i_1}^{(1)},\ldots,x_{i_s}^{(s)}).
\end{equation}
By $\cG_{n,s}^{\perp}$ we denote the set
\begin{align*}
\cG_{n,s}^{\perp}=\{\bsv \in \NN_0^s \, : \, \textnormal{ for all
} j=1,\ldots,s \textnormal{ either } v_j=0, \textnormal{ or }
v_j\geq 2m_j \textnormal{ and } v_j \textnormal{ even} \}.
\end{align*}

We will make use of the following result.
\begin{lem}\label{grile}
Let $Q_{n,s}$ be as in \eqref{cpghr}. For any $g$ of the form
\begin{align*}
g(\bsx)=\sum_{\bsv\in\NN_0^s}\widehat{g}(\bsv) H_{\bsv}(\bsx) \ \
\mbox{ for all }\ \bsx\in\RR^s,
\end{align*}
we have
\begin{align*}
\left|\int_{\RR^s} g(\bsx)\varphi_s(\bsx)\rd \bsx -
Q_{n,s}(g)\right| \le \sum_{\bsv \in \cG_{n,s}^{\perp} \setminus
\{\bszero\}}| \widehat{g}(\bsv)|  (\sqrt[4]{8\pi})^{\abs{\bsv}_*},
\end{align*}
where we put $\abs{\bsv}_*:=\abs{\{j: v_j\neq 0\}}$ for
$\bsv=(v_1,\ldots,v_s)\in \NN_0^s$.
\end{lem}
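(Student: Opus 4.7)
The plan is to expand $g$ in its Hermite series, exchange summation with both the integration against $\varphi_s$ and the quadrature rule $Q_{n,s}$, and then reduce the problem to understanding $Q_{n,s}(H_{\bsv})$ for each $\bsv \in \NN_0^s$. The interchange is justified (for instance, on the dense subspace where the sum is finite, or under the assumption that the coefficients $\widehat{g}(\bsv)$ are absolutely summable together with the Cramer bound, which forces $\sum_{\bsv}|\widehat{g}(\bsv)|\,|H_{\bsv}(\bsx)|$ to converge uniformly on compacta); we may therefore write
\begin{align*}
\int_{\RR^s} g(\bsx)\varphi_s(\bsx)\rd \bsx - Q_{n,s}(g)
= \sum_{\bsv\in\NN_0^s}\widehat{g}(\bsv)\left(\int_{\RR^s} H_{\bsv}(\bsx)\varphi_s(\bsx)\rd \bsx - Q_{n,s}(H_{\bsv})\right).
\end{align*}

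Next I would use orthonormality of the Hermite polynomials with respect to $\varphi_s$ to observe that the integral on the right equals $1$ when $\bsv=\bszero$ and $0$ otherwise. Because $Q_{n,s}=Q_{m_1}^{(1)}\otimes\cdots\otimes Q_{m_s}^{(s)}$ and $H_{\bsv}(\bsx)=\prod_{j=1}^s H_{v_j}(x_j)$, the quadrature applied to $H_{\bsv}$ factorises as $Q_{n,s}(H_{\bsv})=\prod_{j=1}^s Q_{m_j}^{(j)}(H_{v_j})$. From Lemma~\ref{leGH}, each factor equals $1$ if $v_j=0$, equals $0$ whenever $1\le v_j\le 2m_j-1$ or $v_j$ is odd with $v_j\ge 2m_j+1$, and is bounded in modulus by $\sqrt[4]{8\pi}$ when $v_j$ is even with $v_j\ge 2m_j$. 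In particular $Q_{n,s}(H_{\bszero})=1$, which cancels the contribution from $\bsv=\bszero$, and $Q_{n,s}(H_{\bsv})=0$ unless $\bsv\in\cG_{n,s}^{\perp}$.

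Consequently, the sum collapses to indices $\bsv\in\cG_{n,s}^{\perp}\setminus\{\bszero\}$, for which
\begin{align*}
|Q_{n,s}(H_{\bsv})|\le\prod_{j:\,v_j\ne 0}\sqrt[4]{8\pi}=(\sqrt[4]{8\pi})^{|\bsv|_*}.
\end{align*}
Taking absolute values inside the resulting sum and applying the triangle inequality yields the claimed estimate.

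The only genuinely delicate point is the interchange of the infinite Hermite expansion with the integral and with $Q_{n,s}$; everything else is a clean tensor-product computation combined with the one-dimensional bounds in Lemma~\ref{leGH}. This absolute-convergence issue is routinely handled either by reducing to finite partial sums (since the Hermite polynomials are dense) and passing to the limit using that $\widehat{g}(\bsv)$ is square-summable together with the pointwise Cramer bound, or by restricting attention to $g$ in the reproducing kernel Hilbert space $\calH(K_{s,\bsa,\bsb,\omega})$ where this absolute convergence is automatic — which is the only case needed for the subsequent application of the lemma.
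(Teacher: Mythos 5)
Your proof is correct and follows essentially the same route as the paper: expand $g$ in its Hermite series, use orthonormality to reduce the error to $\sum_{\bsv\neq\bszero}\widehat{g}(\bsv)Q_{n,s}(H_{\bsv})$, tensorize, and invoke Lemma~\ref{leGH} to kill all terms outside $\cG_{n,s}^{\perp}$ and bound the rest by $(\sqrt[4]{8\pi})^{\abs{\bsv}_*}$. Your extra care about interchanging the infinite sum with the integral and the quadrature rule is a point the paper passes over silently, and your handling of it is adequate for the intended application.
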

\begin{proof}
Using the results from Lemma~\ref{leGH} as well as the 
orthonormality of the Hermite polynomials we have
\begin{align*}
\abs{\int_{\RR^s} g(\bsx)\varphi_s(\bsx)\rd \bsx - Q_{n,s}(g)} = & \abs{\sum_{\bsv \in \NN_0^s \setminus \{\bszero\}} \widehat{g}(\bsv) Q_{n,s}(H_{\bsv})} \\
\le & \sum_{\bsv \in \NN_0^s \setminus \{\bszero\}} \abs{\widehat{g}(\bsv)} \prod_{j=1}^{s}|Q_{m_j}^{(j)} (H_{v_j})|\\
\le & \sum_{\bsv \in \cG_{n,s}^{\perp} \setminus \{\bszero\}}
\abs{\widehat{g}(\bsv)} (\sqrt[4]{8\pi})^{\abs{\bsv}_*},
\end{align*}
as desired.
\end{proof}

\subsubsection{Error analysis in $\cH(K_{s,\bsa,\bsb,\omega})$}

We proceed in a similar way as in \cite{DKPW13,KSW06}. Let
$M>1$, and define
\begin{align}\label{eqAsMweighted}
\A (s,M):=\left\{\bsh\in\NN_0^s: \omega_{\bsh}^{-1}<M\right\}.
\end{align}
For $f \in \cH(K_{s,\bsa,\bsb,\omega})$ and $\bsh \in \NN_0^s$
define
\begin{align*}
f_{\bsh}(\bsx):=f(\bsx) H_{\bsh}(\bsx) \ \ \mbox{ for }\
\bsx\in\RR^s.
\end{align*}
We approximate $f \in
\mathcal{H}(K_{s,\bsa,\bsb,\omega})$ by algorithms of the form
\begin{align}\label{eqdefAnsM}
A_{n,s,M}(f)(\bsx)= \sum_{\bsh \in \mathcal{A}(s,M)}
Q_{n,s}(f_{\bsh})  H_{\bsh}(\bsx)\qquad\textnormal{for }
\bsx\in\RR^s,
\end{align}
where $Q_{n,s}$ is a Gauss-Hermite rule of the form
\eqref{cpghr}. The choice of $M$ will be given below. Then we have
\begin{align*}
(f-A_{n,s,M}(f))(\bsx)=& \sum_{\bsh \not \in  \mathcal{A}(s,M)}
\widehat{f}(\bsh) H_{\bsh}(\bsx) +\sum_{\bsh \in
\mathcal{A}(s,M)}(\widehat{f}(\bsh)-Q_{n,s}(f_{\bsh}))
H_{\bsh}(\bsx).
\end{align*}
Using Parseval's identity we obtain
\begin{align}\label{approx_err}
\|f-A_{n,s,M}(f)\|_{\LL_2}^2& = \sum_{\bsh \not \in  \mathcal{A}(s,M)} |\widehat{f}(\bsh)|^2 +
\sum_{\bsh \in \mathcal{A}(s,M)} |\widehat{f}(\bsh)- Q_{n,s}(f_{\bsh})|^2\nonumber\\
&= \sum_{\bsh \not \in  \mathcal{A}(s,M)} |\widehat{f}(\bsh)|^2 +
\sum_{\bsh \in
\mathcal{A}(s,M)}\left|\int_{\RR^s}f_{\bsh}(\bsx)\varphi_s(\bsx)\rd\bsx-Q_{n,s}(f_{\bsh})\right|^2.
\end{align}
We have
\begin{align}\label{approx_err1}
\sum_{\bsh \not \in  \mathcal{A}(s,M)}|\widehat{f}(\bsh)|^2 =
\sum_{\bsh \not \in\mathcal{A}(s,M)}|\widehat{f}(\bsh)|^2
\omega_{\bsh}\,\omega_{\bsh}^{-1} \le\frac{1}{M}
\|f\|_{K_{s,\bsa,\bsb,\omega}}^2.
\end{align}

Now we estimate the second term in \eqref{approx_err}.
Unfortunately, in general, $f \in  \cH(K_{s,\bsa,\bsb,\omega})$
does not imply $f_{\bsh} \in  \cH(K_{s,\bsa,\bsb,\omega})$. However, we
can show the following result.

\begin{lem}\label{le_Hermitseriesfh}
The function $f_{\bsh}$ can be pointwise represented as a Hermite
series
\begin{align*}
f_{\bsh}(\bsx)=\sum_{\bsk\in\NN_0^s}\widehat{f_{\bsh}}(\bsk)H_{\bsk}(\bsx)\qquad\textnormal{for all }\bsx\in\RR^s.
\end{align*}
\end{lem}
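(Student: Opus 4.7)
The plan is to derive the Hermite expansion of $f_{\bsh}=f\cdot H_{\bsh}$ by multiplying the pointwise Hermite expansion of $f$ by $H_{\bsh}$ and then rearranging via the linearisation formula for products of Hermite polynomials.

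First, I would show that every $f\in\cH(K_{s,\bsa,\bsb,\omega})$ admits an absolutely and pointwise convergent Hermite expansion $f(\bsx)=\sum_{\bsk}\widehat{f}(\bsk)H_{\bsk}(\bsx)$. By Cauchy--Schwarz in the RKHS inner product,
\begin{align*}
\sum_{\bsk}|\widehat{f}(\bsk)|\,|H_{\bsk}(\bsx)|\le\|f\|_{K_{s,\bsa,\bsb,\omega}}\sqrt{K_{s,\bsa,\bsb,\omega}(\bsx,\bsx)},
\end{align*}
and Cramer's bound combined with $\sum_{\bsk}\omega_{\bsk}<\infty$ keeps the right-hand side finite for each $\bsx\in\RR^s$. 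Multiplying termwise by $H_{\bsh}(\bsx)$ and applying the componentwise linearisation formula
\begin{align*}
H_k(x)H_h(x)=\sum_{j=0}^{\min(k,h)}\frac{j!\binom{k}{j}\binom{h}{j}\sqrt{(k+h-2j)!}}{\sqrt{k!\,h!}}\,H_{k+h-2j}(x),
\end{align*}
one can rewrite $H_{\bsk}(\bsx)H_{\bsh}(\bsx)$ as a \emph{finite} linear combination $\sum_{\bsl}L(\bsk,\bsh,\bsl)H_{\bsl}(\bsx)$, and so
\begin{align*}
f_{\bsh}(\bsx)=\sum_{\bsk}\widehat{f}(\bsk)\sum_{\bsl}L(\bsk,\bsh,\bsl)H_{\bsl}(\bsx).
\end{align*}

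The core step is to interchange these two summations, which yields $f_{\bsh}(\bsx)=\sum_{\bsl}c(\bsl)H_{\bsl}(\bsx)$ with $c(\bsl):=\sum_{\bsk}\widehat{f}(\bsk)L(\bsk,\bsh,\bsl)$. Inspection of the explicit formula shows that $|L(\bsk,\bsh,\bsl)|$ is bounded by a polynomial in $\bsk$ of degree at most $|\bsh|/2$ whose coefficients depend only on $\bsh$, and that for fixed $(\bsk,\bsh)$ at most $\prod_j(h_j+1)$ indices $\bsl$ contribute. Since $|\widehat{f}(\bsk)|\le\sqrt{\omega_{\bsk}}\|f\|_{K_{s,\bsa,\bsb,\omega}}$ decays super-exponentially in each $k_j$ and Cramer gives $|H_{\bsl}(\bsx)|\le 1/\sqrt{\varphi_s(\bsx)}$ for every $\bsx$, the iterated double sum converges absolutely and Fubini for series legitimises the swap. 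Finally, I would identify $c(\bsl)$ with the integral Hermite coefficient $\widehat{f_{\bsh}}(\bsl)=\int f H_{\bsh}H_{\bsl}\varphi_s\rd\bsx$ by substituting the Hermite series for $f$ into the integrand and exchanging sum and integral; the pointwise bound $|f(\bsx)|\le C/\sqrt{\varphi_s(\bsx)}$ (a consequence of $K_{s,\bsa,\bsb,\omega}(\bsx,\bsx)\le C/\varphi_s(\bsx)$) together with Cramer produces an integrable majorant proportional to $\sqrt{\varphi_s(\bsx)}$, so dominated convergence applies.

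The main obstacle is obtaining the polynomial bound on the linearisation coefficients $L(\bsk,\bsh,\bsl)$, since this is what controls the growth in $\bsk$ after the product $H_{\bsk}H_{\bsh}$ is re-expanded. Once that estimate is in hand, the super-exponential decay of $\omega_{\bsk}$ dominates every other factor and each of the remaining interchanges is routine.
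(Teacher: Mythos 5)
Your argument is correct, but it reaches the conclusion by a different route than the paper. The paper works from the coefficient side: it defines $\widehat{f_{\bsh}}(\bsv)=\int_{\RR^s} f H_{\bsh}H_{\bsv}\varphi_s\rd\bsx$, rewrites this via the Carlitz linearisation formula as a finite combination of the $\widehat{f}(\bsh\oplus_{\bsr}\bsv)$, reuses the Cauchy--Schwarz machinery ($\Theta_1\times\Theta_2$) from the proof of Lemma~\ref{le5} to show $\sum_{\bsv\in\NN_0^s}|\widehat{f_{\bsh}}(\bsv)|<\infty$, and then invokes \cite[Proposition~2.6]{IL} as a black box to pass from absolute summability of the coefficients to the pointwise representation. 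You instead work from the function side: you start from the pointwise absolutely convergent expansion of $f$ (the reproducing property plus Cramer's bound), multiply by $H_{\bsh}$, re-expand each product $H_{\bsk}H_{\bsh}$ by the same Carlitz identity, and justify the rearrangement by the elementary observation that the linearisation coefficients grow only polynomially in $\bsk$ while $|\widehat{f}(\bsk)|\le\sqrt{\omega_{\bsk}}\,\|f\|_{K_{s,\bsa,\bsb,\omega}}$ decays at least exponentially; a final dominated-convergence step identifies the resulting coefficients $c(\bsl)$ with the integral coefficients $\widehat{f_{\bsh}}(\bsl)$. Your version is more self-contained (it does not rely on the cited representation result, whose hypotheses one would otherwise have to verify for $f_{\bsh}$) and its estimates are cruder but simpler than the paper's Cauchy--Schwarz bound; the price is the extra coefficient-identification step, which you handle correctly via the integrable majorant $|H_{\bsh}H_{\bsl}|\sqrt{\varphi_s}$ (a polynomial times $\sqrt{\varphi_s}$, not literally a constant multiple of $\sqrt{\varphi_s}$ -- a harmless imprecision). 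The paper's choice has the practical advantage that the very same computation is needed anyway in Lemma~\ref{le5}, so nothing is proved twice.
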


For technical reasons, we defer the proof of this lemma to the end of this subsection.
With the help of Lemma~\ref{le_Hermitseriesfh} we can estimate the
integration error of $Q_{n,s}$ for functions of the form
$f_{\bsh}$.

\begin{lem}\label{le5}
For $f$ in the unit ball of $\cH(K_{s,\bsa,\bsb,\omega})$ and
$\bsh \in \mathcal{A}(s,M)$ we have
\begin{align*}
&\lefteqn{\left|\int_{\RR^s} f_{\bsh}(\bsx)\varphi_s(\bsx)\rd\bsx-Q_{n,s}(f_{\bsh}) \right|^2}\\
&\quad\le 2^s \left(\prod_{j=1}^{\min (s,j(x_M))} \left\lceil
\left(\frac{\log M}{a_j
\log\omega^{-1}}\right)^{1/b_j}\right\rceil\right)M^K
\sum_{\bsv\in \cG_{n,s}^{\bot} \setminus
\{\bszero\}}(\sqrt{8\pi})^{\abs{\bsv}_*}\omega^{\frac{1}{2}\sum_{j=1}^s
a_j (v_j/2)^{b_j}},
\end{align*}
where
\begin{align}
x_M&:=\frac{\log M}{\log \omega^{-1}},\nonumber\\
j(x)&:=\sup\{j\in\NN: x>a_j\},\nonumber\\
K&=K(\omega):= 3k-1+\frac{2\log (1+\omega^k)}{\log \omega^{-1}},
\ \ \ \mbox{with $k:=\max\left(1,\left\lceil
\frac{\log(\omega^{-1/8}-1)}{\log\omega}\right\rceil\right)$.}\label{def_K}
\end{align}
\end{lem}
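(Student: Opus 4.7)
The plan is to start from Lemma~\ref{grile} applied to $g=f_{\bsh}$, which is legitimate thanks to the Hermite series representation of $f_{\bsh}$ provided by Lemma~\ref{le_Hermitseriesfh}. This yields
$$\left|\int_{\RR^{s}} f_{\bsh}(\bsx)\varphi_{s}(\bsx)\rd \bsx - Q_{n,s}(f_{\bsh})\right| \le \sum_{\bsv\in\cG_{n,s}^{\perp}\setminus\{\bszero\}}|\widehat{f_{\bsh}}(\bsv)|(\sqrt[4]{8\pi})^{|\bsv|_{*}}.$$
I would then square both sides and apply Cauchy--Schwarz to the outer $\bsv$-sum with weights $\omega^{\frac{1}{2}\sum_{j}a_{j}(v_{j}/2)^{b_{j}}}$. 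One factor is precisely $\sum_{\bsv}(\sqrt{8\pi})^{|\bsv|_{*}}\omega^{\frac{1}{2}\sum_{j}a_{j}(v_{j}/2)^{b_{j}}}$, which matches the inner sum on the right of the claim, and the task reduces to bounding the companion sum $S:=\sum_{\bsv\in\cG_{n,s}^{\perp}\setminus\{\bszero\}}|\widehat{f_{\bsh}}(\bsv)|^{2}\,\omega^{-\frac{1}{2}\sum_{j}a_{j}(v_{j}/2)^{b_{j}}}$ by the prefactor $2^{s}\prod_{j\le\min(s,j(x_{M}))}\lceil(x_{M}/a_{j})^{1/b_{j}}\rceil\,M^{K}$.

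To handle $S$, I would expand $\widehat{f_{\bsh}}(\bsv)=\int_{\RR^{s}}f\,H_{\bsh}H_{\bsv}\varphi_{s}\rd\bsx=\sum_{\bsk}\widehat{f}(\bsk)\,C(\bsk,\bsh,\bsv)$, where $C(\bsk,\bsh,\bsv)=\prod_{j=1}^{s}\int_{\RR}H_{k_{j}}H_{h_{j}}H_{v_{j}}\varphi\rd x$ factors over coordinates. The standard selection rules for Hermite triple integrals force $C(k_{j},h_{j},v_{j})=0$ unless $k_{j}+h_{j}+v_{j}$ is even and $|h_{j}-v_{j}|\le k_{j}\le h_{j}+v_{j}$, so for each pair $(\bsh,\bsv)$ only at most $\prod_{j}(\min(h_{j},v_{j})+1)\le\prod_{j}(h_{j}+1)$ values of $\bsk$ contribute. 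Cauchy--Schwarz in $\bsk$ combined with $\|f\|_{K_{s,\bsa,\bsb,\omega}}\le 1$ gives $|\widehat{f_{\bsh}}(\bsv)|^{2}\le\sum_{\bsk}\omega_{\bsk}\,C(\bsk,\bsh,\bsv)^{2}$. Since $\bsh\in\A(s,M)$ and $a_{1}\ge 1$, one has $h_{j}=0$ for $j>j(x_{M})$ and $h_{j}\le(x_{M}/a_{j})^{1/b_{j}}$ otherwise, so the counting bound collapses to $2^{s}\prod_{j\le\min(s,j(x_{M}))}\lceil(x_{M}/a_{j})^{1/b_{j}}\rceil$, producing the desired combinatorial prefactor.

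The heart of the argument, and the main obstacle, is the extraction of $M^{K}$. The key algebraic step is a convexity bound: since $k_{j}+h_{j}\ge v_{j}$ and $b_{j}\ge 1$, one has $(k_{j}+h_{j})^{b_{j}}\le 2^{b_{j}-1}(k_{j}^{b_{j}}+h_{j}^{b_{j}})$, and consequently $k_{j}^{b_{j}}\ge 2(v_{j}/2)^{b_{j}}-h_{j}^{b_{j}}$. Summing over $j$ and using $\omega_{\bsh}^{-1}<M$ yields
$$\frac{\omega_{\bsk}}{\omega^{\frac{1}{2}\sum_{j}a_{j}(v_{j}/2)^{b_{j}}}}\le \omega_{\bsh}^{-1}\,\omega^{\frac{3}{2}\sum_{j}a_{j}(v_{j}/2)^{b_{j}}}< M\,\omega^{\frac{3}{2}\sum_{j}a_{j}(v_{j}/2)^{b_{j}}}.$$
The residual $\omega^{\frac{3}{2}\sum_{j}a_{j}(v_{j}/2)^{b_{j}}}$ decays super-polynomially in $\bsv$, enabling one to absorb the triple-integral factors $C(\bsk,\bsh,\bsv)^{2}$. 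Promoting this bare $M$ to the specific $M^{K}$ of \eqref{def_K} requires a finer analysis of the triple integrals $C(k_{j},h_{j},v_{j})$ in the ``bad'' regime where $h_{j}$ is comparable to $v_{j}/2$, so that the reverse triangle inequality no longer forces $k_{j}\ge v_{j}/2$; the integer $k$ in \eqref{def_K} is chosen precisely so that a geometric series in the excess of $h_{j}$ over $v_{j}/2$ converges and contributes exactly the additive correction $\frac{2\log(1+\omega^{k})}{\log\omega^{-1}}$ to $K$. Summing the resulting per-$\bsv$ bound then completes the estimate of $S$ and hence the proof.
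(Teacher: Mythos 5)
Your outline follows the same broad route as the paper: apply Lemma~\ref{grile} to $f_{\bsh}$ (justified by Lemma~\ref{le_Hermitseriesfh}), linearize the products $H_{h_j}H_{v_j}$ (your triple integrals $C(\bsk,\bsh,\bsv)$ are exactly the Carlitz coefficients), apply Cauchy--Schwarz, and count multiplicities to get the prefactor $2^s\prod_j\lceil(\log M/(a_j\log\omega^{-1}))^{1/b_j}\rceil$. The organization of the Cauchy--Schwarz step differs slightly (you split the $\bsv$-sum first with the weight $\omega^{\frac12\sum_j a_j(v_j/2)^{b_j}}$ and then apply Cauchy--Schwarz again in $\bsk$; the paper applies a single Cauchy--Schwarz to the joint $(\bsv,\bsr)$-sum, producing $\Theta_1\times\Theta_2$), but this is cosmetic.

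The genuine gap is that the step you yourself call ``the heart of the argument'' --- extracting $M^{K}$ with the specific $K(\omega)$ of \eqref{def_K} while absorbing the linearization coefficients --- is only described qualitatively and never carried out. This is not a routine verification: the squared coefficients equal $\binom{t_j}{r_j}\binom{T_j-t_j+2r_j}{r_j}\binom{T_j}{t_j-r_j}$ and can be as large as $4^{h_j+v_j}$, so the residual decay $\omega^{\frac32 a_j(v_j/2)^{b_j}}$ from your convexity bound does \emph{not} absorb them when $\omega$ is close to $1$ (one would need $4\,\omega^{3a_j/4}<1$). The paper's actual mechanism is different from the ``geometric series in the excess of $h_j$ over $v_j/2$'' you describe: one inserts $\omega^{kr_j}\omega^{-(k-1)r_j}$, bounds each binomial sum by its generating function via $\sum_{r}\binom{n}{r}\omega^{kr}\le(1+\omega^k)^{n}$, with $k$ chosen so that $8\log(1+\omega^k)\le\log\omega^{-1}$; the losses $\omega^{-(3k-2)t_j}$ and $(1+\omega^k)^{2h_j+2v_j}$ are then converted, using $|v_j|^{b_j}\le 2^{b_j}(|v_j\pm h_j|^{b_j}+h_j^{b_j})$ and $\omega_{\bsh}^{-1}<M$, into the factor $M^{K}$ and the surviving weight $\omega^{\frac12 a_j(v_j/2)^{b_j}}$, after checking that $2^{-b_j-1}a_jv_j^{b_j}\ge v_j\,\frac{2\log(1+\omega^k)}{\log\omega^{-1}}$ for $v_j\ge2$. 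This computation is the bulk of the paper's proof of Lemma~\ref{le5}, and without it your argument establishes neither the exponent $K$ nor, in fact, convergence of the relevant sums; so the proposal is an accurate plan but not a proof.
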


\begin{proof}
According to Lemma~\ref{le_Hermitseriesfh} we can apply Lemma
\ref{grile} to the second term in \eqref{approx_err} and obtain
\begin{align}\label{eqintfh}
\left|\int_{\RR^s} f_{\bsh}(\bsx)\varphi_s(\bsx)\rd\bsx-
Q_{n,s}(f_{\bsh}) \right|^2 \le\left(\sum_{\bsv\in
\cG_{n,s}^{\bot} \setminus \{\bszero\}} |\widehat{f_{\bsh}}(\bsv)|
(\sqrt[4]{8\pi})^{\abs{\bsv}_*} \right)^2.
\end{align}
For fixed $\bsh\in\A (s,M)$ and $\bsv\in \cG_{n,s}^{\bot}
\setminus \{\bszero\}$, we have
\begin{align*}
\widehat{f_{\bsh}}(\bsv)&=\int_{\RR^s} f(\bsx) H_{\bsh} (\bsx) H_{\bsv}(\bsx)\varphi_s (\bsx)\rd\bsx\\
&=\int_{\RR^s} f(\bsx) \left(\prod_{j=1}^s H_{h_j} (x_j)
H_{v_j}(x_j)\right)\varphi_s (\bsx)\rd\bsx.
\end{align*}
Now we write the product of two Hermite polynomials as a linear combination of Hermite polynomials. To this end
we write $t_j=\min (v_j,h_j)$ and
$T_j=\max (v_j,h_j)$ for $j\in\{1,\ldots,s\}$. With this notation
we have, using a result from \cite[p.~1]{C62},
\begin{align*}
H_{h_j}(x_j) H_{v_j}(x_j)=\sum_{r_j=0}^{t_j}\left(\frac{t_j !}{T_j
!}\right)^{1/2} {T_j \choose t_j -r_j}
\frac{((\abs{h_j-v_j}+2r_j)!)^{1/2}}{r_j !}
H_{\abs{h_j-v_j}+2r_j}(x_j).
\end{align*}
Hence
\begin{align*}
&\lefteqn{\widehat{f}_{\bsh}(\bsv)=}\\
&=\int_{\RR^s} f(\bsx)
\left(\prod_{j=1}^s \sum_{r_j=0}^{t_j}\left(\frac{t_j !}{T_j !}\right)^{1/2}
{T_j \choose t_j -r_j} \frac{((\abs{h_j-v_j}+2r_j)!)^{1/2}}{r_j !} H_{\abs{h_j-v_j}+2r_j}(x_j)\right)\varphi_s (\bsx)\rd\bsx\\
&=\int_{\RR^s} f(\bsx) \sum_{r_1=0}^{t_1}\cdots\sum_{r_s=0}^{t_s}
\Bigg(\prod_{j=1}^s \left(\frac{t_j !}{T_j !}\right)^{1/2} {T_j \choose t_j -r_j} \frac{((\abs{h_j-v_j}+2r_j)!)^{1/2}}{r_j !}\\
&\qquad\hspace{4cm}\times H_{\abs{h_j-v_j}+2r_j}(x_j)\Bigg)\varphi_s (\bsx)\rd\bsx\\
&=\sum_{r_1=0}^{t_1}\cdots\sum_{r_s=0}^{t_s} \left(\prod_{j=1}^s
\left(\frac{t_j !}{T_j !}\right)^{1/2} {T_j \choose t_j -r_j} \frac{((\abs{h_j-v_j}+2r_j)!)^{1/2}}{r_j !}\right)\\
&\qquad\hspace{2cm}\times\int_{\RR^s} f(\bsx)\left(\prod_{j=1}^s
H_{\abs{h_j-v_j}+2r_j}(x_j)\right)\varphi_s (\bsx)\rd\bsx.
\end{align*}
For $j\in\{1,\ldots,s\}$, and given $v_j,h_j$, and $r_j$, we now
write
\begin{align*}
h_j\oplus_{r_j} v_j:= \abs{h_j-v_j}+2r_j,
\end{align*}
and by $\bsh\oplus_{\bsr} \bsv$ we denote the same operation
applied component-wise to vectors. With this notation,
\begin{align*}
\widehat{f_{\bsh}}(\bsv)&=\sum_{r_1=0}^{t_1}\cdots\sum_{r_s=0}^{t_s} \left(\prod_{j=1}^s
\left(\frac{t_j !}{T_j !}\right)^{1/2} {T_j \choose t_j -r_j} \frac{((h_j\oplus_{r_j} v_j)!)^{1/2}}{r_j !}\right)\\
&\qquad\hspace{2cm}\times\int_{\RR^s} f(\bsx) H_{\bsh\oplus_{\bsr}\bsv}(\bsx)\varphi_s (\bsx)\rd\bsx\\
&=\sum_{r_1=0}^{t_1}\cdots\sum_{r_s=0}^{t_s} \left(\prod_{j=1}^s
\left(\frac{t_j !}{T_j !}\right)^{1/2} {T_j \choose t_j -r_j}
\frac{((h_j\oplus_{r_j} v_j)!)^{1/2}}{r_j
!}\right)\widehat{f}(\bsh\oplus_{\bsr}\bsv).
\end{align*}
Therefore, from~\eqref{eqintfh},
\begin{align*}
&\left|\int_{\RR^s} f_{\bsh}(\bsx)\varphi_s(\bsx)\rd\bsx-Q_{n,s}(f_{\bsh}) \right|^2\\
&\quad\le \Bigg(\sum_{\bsv\in \cG_{n,s}^{\bot} \setminus \{\bszero\}}
(\sqrt[4]{8\pi})^{\abs{\bsv}_{\ast}} \Bigg|\sum_{r_1=0}^{t_1}\cdots\sum_{r_s=0}^{t_s}
\left(\prod_{j=1}^s \left(\frac{t_j !}{T_j !}\right)^{1/2} {T_j \choose t_j -r_j} \frac{((h_j\oplus_{r_j} v_j)!)^{1/2}}{r_j !}\right)\\
&\qquad \hspace{6cm}\times\widehat{f}(\bsh\oplus_{\bsr}\bsv)\Bigg|\Bigg)^2\\
&\quad\le \Bigg(\sum_{\bsv\in \cG_{n,s}^{\bot} \setminus \{\bszero\}}
(\sqrt[4]{8\pi})^{\abs{\bsv}_{\ast}} \sum_{r_1=0}^{t_1}\cdots\sum_{r_s=0}^{t_s} \left(\prod_{j=1}^s
\left(\frac{t_j !}{T_j !}\right)^{1/2} {T_j \choose t_j -r_j} \frac{((h_j\oplus_{r_j} v_j)!)^{1/2}}{r_j !}\right)\\
&\qquad\hspace{6cm}\times |\widehat{f}(\bsh\oplus_{\bsr}\bsv)| \omega_{\bsh\oplus_{\bsr}\bsv}^{-1/2}\omega_{\bsh\oplus_{\bsr}\bsv}^{1/2}\Bigg)^2.\\
\end{align*}
Using the Cauchy-Schwarz inequality we obtain
\begin{align}\label{errfhab}
&\left|\int_{\RR^s} f_{\bsh}(\bsx)\varphi_s(\bsx)\rd\bsx-Q_{n,s}(f_{\bsh})\right|^2\nonumber \\
&\quad\le  \left(\sum_{\bsv\in \cG_{n,s}^{\bot} \setminus \{\bszero\}}\sum_{r_1=0}^{t_1}
\cdots\sum_{r_s=0}^{t_s}|\widehat{f}(\bsh\oplus_{\bsr}\bsv)|^2\omega_{\bsh\oplus_{\bsr}\bsv}^{-1}\right)\nonumber\\
&\qquad\times \left(\sum_{\bsv\in \cG_{n,s}^{\bot} \setminus \{\bszero\}}
(\sqrt{8\pi})^{\abs{\bsv}_{\ast}}\sum_{r_1=0}^{t_1}\cdots\sum_{r_s=0}^{t_s}\left(\prod_{j=1}^s
\frac{t_j !}{T_j !} {T_j \choose t_j -r_j}^2 \frac{(h_j\oplus_{r_j} v_j)!}{(r_j !)^2}\right)\omega_{\bsh\oplus_{\bsr}\bsv}\right)\nonumber\\
&\quad= \Theta_1 \times \Theta_2,
\end{align}
where
\begin{align}\label{eqpoint5}
\Theta_1:=\sum_{r_1=0}^{t_1}\cdots\sum_{r_s=0}^{t_s}\sum_{\bsv\in
\cG_{n,s}^{\bot} \setminus
\{\bszero\}}|\widehat{f}(\bsh\oplus_{\bsr}\bsv)|^2\omega_{\bsh\oplus_{\bsr}\bsv}^{-1},
\end{align}
and
\begin{align*}
\Theta_2:=\sum_{\bsv\in \cG_{n,s}^{\bot} \setminus
\{\bszero\}}(\sqrt{8\pi})^{\abs{\bsv}_{\ast}}\sum_{r_1=0}^{t_1}\cdots\sum_{r_s=0}^{t_s}\left(\prod_{j=1}^s
\frac{t_j !}{T_j !} {T_j \choose t_j -r_j}^2
\frac{(h_j\oplus_{r_j} v_j)!}{(r_j
!)^2}\right)\omega_{\bsh\oplus_{\bsr}\bsv}.
\end{align*}
Now we estimate $\Theta_1$ and $\Theta_2$ from above.

\paragraph{Upper bound on $\Theta_1$:}

For given $\bsh=(h_1,\ldots,h_s)\in\A (s,M)$,
$\bsk=(k_1,\ldots,k_s)\in\NN_0^s$, and
\begin{align*}
\bsr=(r_1,\ldots,r_s)\in\bigotimes_{j=1}^s \{0,\ldots,t_j\},
\end{align*}
the system of equations
\begin{align*}
h_1\oplus_{r_1} v_1 &= k_1,\\
h_2\oplus_{r_2} v_2 &= k_2,\\
\vdots \quad&\\
h_s\oplus_{r_s} v_s &= k_s\\
\end{align*}
has at most $2^s$ solutions $(v_1,\ldots,v_s)\in \cG_{n,s}^{\bot}
\setminus \{\bszero\}$. Hence,
\begin{align*}
\Theta_1 \le \sum_{r_1=0}^{t_1}\cdots\sum_{r_s=0}^{t_s} 2^s
\sum_{\bsk\in\NN_0^s} |\widehat{f} (\bsk)|^2 \omega_{\bsk}^{-1}
\le 2^s \norm{f}_{K_{s,\bsa,\bsb,\omega}}^2 \prod_{j=1}^s (h_j
+1),
\end{align*}
where we used that $t_j \le h_j$.

Note that $\bsh\in\A (s,M)$ means by definition that
$\omega_{\bsh}^{-1}<M$, and this implies $\omega^{-a_j
h_j^{b_j}}<M$ for each $j\in\{1,\ldots,s\}$. Hence we obtain, for
$j\in\{1,\ldots,s\}$,
\begin{align*}
h_j\le \left\lceil \left(\frac{\log M}{a_j
\log\omega^{-1}}\right)^{1/b_j}\right\rceil -1,
\end{align*}
and so
\begin{align*}
\prod_{j=1}^s (h_j +1)\leq \prod_{j=1}^{s}
\left\lceil \left(\frac{\log M}{a_j
\log\omega^{-1}}\right)^{1/b_j}\right\rceil=
 \prod_{j=1}^{\min (s,j(x_M))}
\left\lceil \left(\frac{\log M}{a_j
\log\omega^{-1}}\right)^{1/b_j}\right\rceil,
\end{align*}
where $x_M=\log M/(\log \omega^{-1})$, and $j(x_M)=\sup\{j\in \NN:
x_M> a_j\}$. Overall we have
\begin{align}\label{estTheta1}
\Theta_1 \le 2^s \norm{f}_{K_{s,\bsa,\bsb,\omega}}^2
\prod_{j=1}^{\min (s,j(x_M))} \left\lceil \left(\frac{\log M}{a_j
\log\omega^{-1}}\right)^{1/b_j}\right\rceil.
\end{align}

\paragraph{Upper bound on $\Theta_2$:} Note that $h_j\oplus_{r_j} v_j=T_j-t_j+2r_j$ and therefore
\begin{align*}
\frac{t_j !}{T_j !} {T_j \choose t_j -r_j}^2
\frac{(h_j\oplus_{r_j} v_j)!}{(r_j !)^2}={t_j \choose r_j}{T_j -t_j
+2r_j \choose r_j}{T_j \choose t_j-r_j}.
\end{align*}
Hence,
\begin{align*}
\Theta_2=\sum_{\bsv\in \cG_{n,s}^{\bot} \setminus
\{\bszero\}}(\sqrt{8\pi})^{\abs{\bsv}_*} \prod_{j=1}^s
\sum_{r_j=0}^{t_j} {t_j \choose r_j}{T_j -t_j +2r_j \choose
r_j}{T_j \choose t_j-r_j}  \omega_{h_j\oplus_{r_j} v_j}.
\end{align*}
Since $a_j,b_j\ge 1$, we have\footnote{Here we require $b_j \ge 1$ since for $b_j \in (0,1)$ we would have, according to Jensen's inequality, that $(\abs{h_j-v_j}+2r_j)^{b_j} \le \abs{h_j-v_j}^{b_j}+(2r_j)^{b_j}$.} \label{bjgr1}
\begin{align*}
\omega_{h_j\oplus_{r_j} v_j}=\omega^{a_j
(\abs{h_j-v_j}+2r_j)^{b_j}}\le \omega^{a_j \abs{h_j-v_j}^{b_j} +
a_j (2r_j)^{b_j}} =\omega_{\abs{h_j -v_j}}\omega_{2r_j}\le
\omega_{\abs{h_j-v_j}}\omega^{2r_j}.
\end{align*}
Thus,
\begin{align*}
\Theta_2 &\le\sum_{\bsv\in \cG_{n,s}^{\bot} \setminus \{\bszero\}}(\sqrt{8\pi})^{\abs{\bsv}_*}
\prod_{j=1}^s  \omega_{\abs{h_j-v_j}}
\sum_{r_j=0}^{t_j}\omega^{2r_j} {t_j \choose r_j}{T_j -t_j +2r_j \choose r_j}{T_j \choose t_j-r_j}\\
&\le \sum_{\bsv\in \cG_{n,s}^{\bot} \setminus \{\bszero\}}(\sqrt{8\pi})^{\abs{\bsv}_*} \prod_{j=1}^s
\omega_{\abs{h_j-v_j}} \left(\sum_{r_j=0}^{t_j} {t_j \choose r_j}\omega^{r_j}\right)
\left(\sum_{r_j=0}^{t_j}{T_j +t_j  \choose r_j}\omega^{r_j}\right)\left(\sum_{r_j=0}^{t_j}{T_j \choose t_j-r_j}\right)\\
&= \sum_{\bsv\in \cG_{n,s}^{\bot} \setminus \{\bszero\}}(\sqrt{8\pi})^{\abs{\bsv}_*} \prod_{j=1}^s
\omega_{\abs{h_j-v_j}} \left(\sum_{r_j=0}^{t_j} {t_j \choose r_j}\omega^{r_j}\right)
\left(\sum_{r_j=0}^{t_j}{T_j +t_j  \choose r_j}\omega^{r_j}\right)\left(\sum_{r_j=0}^{t_j}{T_j \choose r_j}\right)\\
&\le \sum_{\bsv\in \cG_{n,s}^{\bot} \setminus
\{\bszero\}}(\sqrt{8\pi})^{\abs{\bsv}_*} \prod_{j=1}^s
\omega_{\abs{h_j-v_j}} \omega^{-t_j}\left(\sum_{r_j=0}^{t_j} {t_j
\choose r_j}\omega^{r_j} \right)\left(\sum_{r_j=0}^{t_j}{T_j +t_j
\choose r_j}\omega^{r_j}\right)\left(\sum_{r_j=0}^{t_j}{T_j
\choose r_j}\omega^{r_j}\right).
\end{align*}
Now, let $k=k(\omega)$ be the smallest positive integer such that
\begin{align*}
k\ge \frac{\log (\omega^{-1/8}-1)}{\log\omega}.
\end{align*}
We then get
\begin{align*}
\Theta_2 &\le \sum_{\bsv\in \cG_{n,s}^{\bot} \setminus \{\bszero\}}(\sqrt{8\pi})^{\abs{\bsv}_*} \prod_{j=1}^s
\Bigg[ \omega_{\abs{h_j-v_j}} \omega^{-t_j}\left(\sum_{r_j=0}^{t_j} {t_j \choose r_j}\omega^{kr_j}\omega^{-(k-1)r_j}\right)\\
&\quad\times \left(\sum_{r_j=0}^{t_j}{T_j +t_j  \choose r_j}\omega^{kr_j} \omega^{-(k-1)r_j}\right)
\left(\sum_{r_j=0}^{t_j}{T_j \choose r_j}\omega^{kr_j} \omega^{-(k-1)r_j}\right)\Bigg]\\
&\le \sum_{\bsv\in \cG_{n,s}^{\bot} \setminus \{\bszero\}}(\sqrt{8\pi})^{\abs{\bsv}_*} \prod_{j=1}^s
\Bigg[ \omega_{\abs{h_j-v_j}} \omega^{-(3k-2)t_j}\\
&\quad\times \left(\sum_{r_j=0}^{t_j} {t_j \choose r_j}\omega^{kr_j}\right)\left(\sum_{r_j=0}^{t_j}{T_j+t_j
\choose r_j}\omega^{kr_j}\right)\left(\sum_{r_j=0}^{t_j}{T_j \choose r_j}\omega^{kr_j}\right)\Bigg]\\
&\le \sum_{\bsv\in \cG_{n,s}^{\bot} \setminus \{\bszero\}}(\sqrt{8\pi})^{\abs{\bsv}_*} \prod_{j=1}^s
\Bigg[ \omega_{\abs{h_j-v_j}} \omega^{-(3k-2)t_j}\\
&\quad\times \left(\sum_{r_j=0}^{t_j} {t_j \choose r_j}\omega^{kr_j}\right)
\left(\sum_{r_j=0}^{T_j+t_j}{T_j+t_j \choose r_j}\omega^{kr_j}\right)\left(\sum_{r_j=0}^{T_j}{T_j \choose r_j}\omega^{kr_j}\right)\Bigg].
\end{align*}
Using the binomial theorem we obtain
\begin{align*}
\Theta_2 &\le \sum_{\bsv\in \cG_{n,s}^{\bot} \setminus \{\bszero\}}(\sqrt{8\pi})^{\abs{\bsv}_*} \prod_{j=1}^s
\omega_{\abs{h_j-v_j}} \omega^{-(3k-2)t_j} (1+\omega^k)^{2T_j +2t_j}\\
&=\sum_{\bsv\in \cG_{n,s}^{\bot} \setminus
\{\bszero\}}(\sqrt{8\pi})^{\abs{\bsv}_*} \prod_{j=1}^s
\omega_{\abs{h_j-v_j}} \omega^{-(3k-2)t_j}
(1+\omega^k)^{2h_j+2v_j}.
\end{align*}
Using again $t_j=\min (v_j,h_j)\le h_j$, we conclude
\begin{align*}
\Theta_2 &\le\sum_{\bsv\in \cG_{n,s}^{\bot} \setminus \{\bszero\}}(\sqrt{8\pi})^{\abs{\bsv}_*} \prod_{j=1}^s
\omega_{\abs{h_j-v_j}} \omega^{-(3k-2)h_j} (1+\omega^k)^{2h_j+2v_j}\\
&= \sum_{\bsv\in \cG_{n,s}^{\bot} \setminus
\{\bszero\}}(\sqrt{8\pi})^{\abs{\bsv}_*} \prod_{j=1}^s
\omega_{\abs{h_j-v_j}} \omega^{-(3k-2)h_j} \omega^{-(2 h_j +2
v_j)\frac{\log (1+\omega^k)}{\log \omega^{-1}}}.
\end{align*}
We now use
\begin{align*}
\abs{v_j}^{b_j}\le 2^{b_j} (\abs{v_j\pm
h_j}^{b_j}+\abs{h_j}^{b_j}),
\end{align*}
i.e.,
\begin{align*}
\frac{\abs{v_j}^{b_j}}{2^{b_j}}-\abs{h_j}^{b_j}\le \abs{v_j\pm
h_j}^{b_j}
\end{align*}
for any $b_j\ge 1$ and any $v_j, h_j\in\ZZ$. Consequently,
\begin{align*}
\Theta_2 &\le \sum_{\bsv\in \cG_{n,s}^{\bot} \setminus \{\bszero\}}(\sqrt{8\pi})^{\abs{\bsv}_*} \prod_{j=1}^s
\omega^{2^{-b_j} a_j v_j^{b_j}} \omega^{-a_j h_j^{b_j}}
\omega^{-h_j\left((3k-2)+\frac{2\log (1+\omega^k)}{\log \omega^{-1}}\right)}\omega^{-v_j \frac{2\log (1+\omega^k)}{\log \omega^{-1}}}\\
&\le\sum_{\bsv\in \cG_{n,s}^{\bot} \setminus \{\bszero\}}(\sqrt{8\pi})^{\abs{\bsv}_*}
\prod_{j=1}^s \omega^{2^{-b_j} a_j v_j^{b_j}}
\omega^{-a_j h_j^{b_j}\left(3k-1+\frac{2\log (1+\omega^k)}{\log \omega^{-1}}\right)}\omega^{-v_j \frac{2\log (1+\omega^k)}{\log \omega^{-1}}}\\
&=\sum_{\bsv\in \cG_{n,s}^{\bot} \setminus \{\bszero\}}
(\sqrt{8\pi})^{\abs{\bsv}_*}(\omega_{\bsh}^{-1})^{3k-1+\frac{2\log (1+\omega^k)}{\log \omega^{-1}}}
\prod_{j=1}^s\omega^{2^{-b_j} a_j v_j^{b_j}-v_j \frac{2\log (1+\omega^k)}{\log \omega^{-1}}}\\
&\le \sum_{\bsv\in \cG_{n,s}^{\bot} \setminus
\{\bszero\}}(\sqrt{8\pi})^{\abs{\bsv}_*} M^K
\prod_{j=1}^s\omega^{2^{-b_j} a_j v_j^{b_j}-v_j \frac{2\log
(1+\omega^k)}{\log \omega^{-1}}},
\end{align*}
where we used $a_j,b_j\ge 1$ \label{bjgr1(2)} for the second inequality, and
$\bsh\in\A (s,M)$ for the last inequality, and where
$K=K(\omega):= 3k-1+\frac{2\log (1+\omega^k)}{\log \omega^{-1}}$.

For $j\in\{1,\ldots,s\}$ we now study the term
\begin{align*}
\omega^{2^{-b_j} a_j v_j^{b_j}-v_j \frac{2\log (1+\omega^k)}{\log
\omega^{-1}}}=\omega^{2^{-b_j-1} a_j v_j^{b_j}}\omega^{2^{-b_j-1}
a_j v_j^{b_j}-v_j \frac{2\log (1+\omega^k)}{\log \omega^{-1}}}.
\end{align*}
We show that
\begin{align}\label{eqexpge0}
2^{-b_j-1} a_j v_j^{b_j}-v_j \frac{2\log (1+\omega^k)}{\log
\omega^{-1}}\ge 0.
\end{align}
Indeed,~\eqref{eqexpge0} is trivially fulfilled if $v_j=0$. If
$v_j>0$, this implies $v_j\ge 2$, since $\bsv\in \cG_{n,s}^{\bot}$.
In this case,~\eqref{eqexpge0} is fulfilled if and only if
\begin{align}\label{eqexpge0b}
v_j^{b_j-1} \ge 8\frac{1}{a_j} \frac{\log (1+\omega^k)}{\log
\omega^{-1}}2^{b_j-1}.
\end{align}
Since $v_j\ge 2$, and since $a_j\ge 1$, \eqref{eqexpge0b} is
certainly fulfilled if
\begin{align*}
2^{b_j-1}\ge 8 \frac{\log (1+\omega^k)}{\log
\omega^{-1}}2^{b_j-1}.
\end{align*}
However, $k$ was chosen exactly such that the latter condition
holds true. Hence,~\eqref{eqexpge0} is satisfied, and we have
\begin{align*}
\omega^{2^{-b_j} a_j v_j^{b_j}-v_j \frac{2\log (1+\omega^k)}{\log
\omega^{-1}}}\le\omega^{2^{-b_j-1} a_j v_j^{b_j}}.
\end{align*}

Consequently,
\begin{align*}
\Theta_2 \le M^K \sum_{\bsv\in \cG_{n,s}^{\bot} \setminus
\{\bszero\}}(\sqrt{8\pi})^{\abs{\bsv}_*}\omega^{\frac{1}{2}\sum_{j=1}^s
a_j (v_j/2)^{b_j}}.
\end{align*}

Now we insert our upper bounds for $\Theta_1$ and $\Theta_2$ into
\eqref{errfhab}. For $f$ in the unit ball of
$\cH(K_{s,\bsa,\bsb,\omega})$ we obtain
\begin{align*}
&\left|\int_{\RR^s} f_{\bsh}(\bsx)\varphi_s(\bsx)\rd\bsx-Q_{n,s}(f_{\bsh}) \right|^2\\
&\quad \le  2^s \left(\prod_{j=1}^{\min (s,j(x_M))} \left\lceil
\left(\frac{\log M}{a_j
\log\omega^{-1}}\right)^{1/b_j}\right\rceil\right)M^K
\sum_{\bsv\in \cG_{n,s}^{\bot} \setminus
\{\bszero\}}(\sqrt{8\pi})^{\abs{\bsv}_*}\omega^{\frac{1}{2}\sum_{j=1}^s
a_j (v_j/2)^{b_j}},
\end{align*}
as claimed.
\end{proof}

Next we show the following proposition.
\begin{prop}
We have
\begin{align}\label{eqboundMapp2}
[e^{{\rm
app}}(\mathcal{H}(K_{s,\bsa,\bsb,\omega}),A_{n,s,M})]^2\le\frac{1}{M}+
M^{2B(s)+K} D(s,\omega,\bsb) F_n,
\end{align}
where $B(s):=\sum_{j=1}^s b_j^{-1}$, $K=K(\omega)$ as in
\eqref{def_K},
\begin{align}\label{def_Fn}
F_n:= \sum_{\bsv\in \cG_{n,s}^{\bot} \setminus
\{\bszero\}}(\sqrt{8\pi})^{\abs{\bsv}_*}\omega^{\frac{1}{2}\sum_{j=1}^s
a_j (v_j/2)^{b_j}},
\end{align}
and where
\begin{align}\label{def_D}
D(s,\omega,\bsb):=8^s \prod_{j=1}^s\left(1+
\log^{-1/b_j}\omega^{-1}\right)^2.
\end{align}
\end{prop}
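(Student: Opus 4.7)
The plan is to combine the error decomposition in \eqref{approx_err} with the tail bound \eqref{approx_err1} and the $\bsh$-uniform estimate of Lemma~\ref{le5}, and then collapse the resulting coordinate products into the advertised closed form $M^{2B(s)+K}D(s,\omega,\bsb)F_n$.

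First, for $f$ in the unit ball of $\cH(K_{s,\bsa,\bsb,\omega})$, I would read off from \eqref{approx_err}--\eqref{approx_err1} that
\[\|f-A_{n,s,M}(f)\|_{\LL_2}^2\le\frac{1}{M}+\sum_{\bsh\in\A(s,M)}\left|\int_{\RR^s}f_{\bsh}(\bsx)\varphi_s(\bsx)\rd\bsx-Q_{n,s}(f_{\bsh})\right|^2,\]
so the $1/M$ term is already in place. Since the right-hand side of Lemma~\ref{le5} is independent of $\bsh\in\A(s,M)$, the remaining sum is at most $|\A(s,M)|$ times that uniform bound.

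Next, I would bound $|\A(s,M)|$ coordinate-wise: the defining inequality $\sum_j a_j h_j^{b_j}<x_M$ forces $h_j<(x_M/a_j)^{1/b_j}$ for each $j$, giving at most $\lceil(x_M/a_j)^{1/b_j}\rceil$ admissible values of $h_j$ when $j\le j(x_M)$ and only $h_j=0$ otherwise. Thus $|\A(s,M)|\le\prod_{j=1}^{\min(s,j(x_M))}\lceil(x_M/a_j)^{1/b_j}\rceil$. Multiplied by the identical ceiling product already present in the Lemma~\ref{le5} bound, this yields the squared ceiling product $\prod_{j=1}^{\min(s,j(x_M))}\lceil(x_M/a_j)^{1/b_j}\rceil^2$, so the $\bsh$-sum is bounded by $2^s\prod_{j=1}^{\min(s,j(x_M))}\lceil(x_M/a_j)^{1/b_j}\rceil^2\cdot M^K F_n$.

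Finally comes the arithmetic clean-up. For $j\le j(x_M)$ one has $u_j:=(x_M/a_j)^{1/b_j}>1$, hence $\lceil u_j\rceil^2\le(u_j+1)^2\le 4u_j^2$, producing an overall factor $4^{\min(s,j(x_M))}\le 4^s$ which combines with the $2^s$ to give $8^s$. For the remaining $\prod_{j=1}^{\min(s,j(x_M))}u_j^2$, the per-coordinate estimate $(x_M/a_j)^{2/b_j}\le M^{2/b_j}\log^{-2/b_j}\omega^{-1}$ (from $a_j\ge 1$ and $\log M\le M$, valid for $M>1$) together with extending $\prod M^{2/b_j}$ to $j=1,\ldots,s$ gives the factor $M^{2B(s)}$; the elementary inequality $\log^{-2/b_j}\omega^{-1}\le(1+\log^{-1/b_j}\omega^{-1})^2$ and the harmless extension of the product using $(1+\log^{-1/b_j}\omega^{-1})\ge 1$ then deliver $\prod_{j=1}^s(1+\log^{-1/b_j}\omega^{-1})^2$. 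Combined with the $8^s$, this is exactly $D(s,\omega,\bsb)$, and taking the supremum over $f$ in the unit ball yields the stated bound. The main obstacle is the bookkeeping around when to extend partial products from the index range $j\le\min(s,j(x_M))$ to all $j\le s$; the monotonicity observations $M^{2/b_j}\ge 1$ and $(1+\log^{-1/b_j}\omega^{-1})\ge 1$ are what make these extensions harmless.
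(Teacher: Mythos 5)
Your proposal is correct and follows essentially the same route as the paper: the decomposition \eqref{approx_err}--\eqref{approx_err1}, the $\bsh$-uniform bound of Lemma~\ref{le5} multiplied by a coordinate-wise count of $|\A(s,M)|$, and the absorption of the resulting ceiling products into $M^{2B(s)}$ and $D(s,\omega,\bsb)$ via $a_j\ge 1$ and $\log M\le M$. The only cosmetic differences are that the paper cites the cardinality bound $|\A(s,M)|\le\prod_{j=1}^s(1+(\log M/(a_j\log\omega^{-1}))^{1/b_j})$ from \cite{IKPW15} instead of deriving it, and reaches the factor $8^s$ via $\lceil u\rceil\le 1+u$ and $(1+\log^{1/b_j}M)\le 2M^{1/b_j}$ rather than your $\lceil u_j\rceil^2\le 4u_j^2$; both bookkeepings are valid.
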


\begin{proof}
Let $f \in \cH(K_{s,\bsa,\bsb,\omega})$ with
$\norm{f}_{K_{s,\bsa,\bsb,\omega}}\le 1$. Using
\eqref{approx_err}, \eqref{approx_err1}, and Lemma \ref{le5}, we
have
\begin{align*}
\norm{f-A_{n,s,M}(f)}_{\LL_2}^2 &\le \frac{1}{M} + \sum_{\bsh\in\A (s,M)} 2^s \left(\prod_{j=1}^{\min (s,j(x_M))}
\left\lceil \left(\frac{\log M}{a_j \log\omega^{-1}}\right)^{1/b_j}\right\rceil\right)\\
&\quad\times M^K \sum_{\bsv\in \cG_{n,s}^{\bot} \setminus \{\bszero\}}(\sqrt{8\pi})^{\abs{\bsv}_*}\omega^{\frac{1}{2}\sum_{j=1}^s a_j (v_j/2)^{b_j}}\\
&= \frac{1}{M} + \abs{\A (s,M)} 2^s \left(\prod_{j=1}^{\min (s,j(x_M))} \left\lceil
\left(\frac{\log M}{a_j \log\omega^{-1}}\right)^{1/b_j}\right\rceil\right)\\
&\quad\times M^K \sum_{\bsv\in \cG_{n,s}^{\bot} \setminus \{\bszero\}}(\sqrt{8\pi})^{\abs{\bsv}_*}\omega^{\frac{1}{2}\sum_{j=1}^s a_j (v_j/2)^{b_j}}.
\end{align*}
Since $\vert\A(s,M)\vert\leq\prod_{j=1}^s(1+(\log M/(a_j\log\omega^{-1}))^{1/b_j})$ due to \cite[Lemma 1]{IKPW15} we have
\begin{align*}
\norm{f-A_{n,s,M}(f)}_{\LL_2}^2 &\le \frac{1}{M} + 2^s M^K\left(\prod_{j=1}^{\min (s,j(x_M))} \left\lceil
\left(\frac{\log M}{a_j \log\omega^{-1}}\right)^{1/b_j}\right\rceil\right)\\
&\quad\times  \left(\prod_{j=1}^s \left(1+\left(\frac{\log M}{a_j
\log\omega^{-1}}\right)^{1/b_j}\right)\right)\sum_{\bsv\in
\cG_{n,s}^{\bot} \setminus
\{\bszero\}}(\sqrt{8\pi})^{\abs{\bsv}_*}\omega^{\frac{1}{2}\sum_{j=1}^s
a_j (v_j/2)^{b_j}}.
\end{align*}

This means that
\begin{align}\label{eqboundMapp}
[e^{{\rm app}}(\mathcal{H}(K_{s,\bsa,\bsb,\omega}),
A_{n,s,M})]^2\le \frac{1}{M}+ 2^s M^K
\left(\prod_{j=1}^s\left(1+\left(\frac{\log M}{a_j \log
\omega^{-1}}\right)^{1/b_j}\right)\right)^2F_n,
\end{align}
where $F_n$ is as in \eqref{def_Fn}.

Furthermore, we estimate
\begin{align*}
\prod_{j=1}^s\left(1+\left(\frac{\log M}{a_j \log \omega^{-1}}\right)^{1/b_j}\right)
&\le\prod_{j=1}^s\left(1+\left(\frac{\log M}{ \log \omega^{-1}}\right)^{1/b_j}\right)\\
&\le \prod_{j=1}^s\left(1+
\log^{-1/b_j}\omega^{-1}\right)\prod_{j=1}^s\left(1+\log^{1/b_j}M\right).
\end{align*}
Since $M$ is assumed to be at least 1, we can bound
$1+\log^{1/b_j} M\le 2M^{1/b_j}$, and obtain
\begin{align*}
\prod_{j=1}^s\left(1+\left(\frac{\log M}{a_j\log
\omega^{-1}}\right)^{1/b_j}\right)\le 2^s
M^{B(s)}\prod_{j=1}^s\left(1+ \log^{-1/b_j} \omega^{-1}\right).
\end{align*}
Plugging this into \eqref{eqboundMapp}, we obtain
\begin{align*}
[e^{{\rm
app}}(\mathcal{H}(K_{s,\bsa,\bsb,\omega}),A_{n,s,M})]^2\le\frac{1}{M}+
M^{2B(s)+K} D(s,\omega,\bsb) F_n,
\end{align*}
where $D(s,\omega,\bsb)$ is as in \eqref{def_D}.
\end{proof}

We now give the proof of Lemma \ref{le_Hermitseriesfh}:

\begin{proof}
To show that $f_{\bsh}$ can be pointwise represented by its
Hermite series, due to \cite[Proposition~2.6]{IL} it is sufficient to verify that
\begin{align*}
\sum_{\bsv\in\NN_0^s}|\widehat{f_{\bsh}}(\bsv)|<\infty.
\end{align*}
To this end we proceed quite similarly to what we did when we
estimated
\begin{align*}
\left(\sum_{\bsv\in \cG_{n,s}^{\bot} \setminus \{\bszero\}}
|\widehat{f_{\bsh}}(\bsv)| (\sqrt[4]{8\pi})^{\abs{\bsv}_*}
\right)^2,
\end{align*}
see \eqref{eqintfh}, in the proof of Lemma~\ref{le5}. By going
through analogous steps, we see that
\begin{align*}
\left(\sum_{\bsv\in\NN_0^s}|\widehat{f_{\bsh}}(\bsv)|\right)^2 &\le \norm{f}_{K_{s,\bsa,\bsb,\omega}}^2 2^s
\left(\prod_{j=1}^{\min (s,j(x_M))} \left\lceil \left(\frac{\log M}{a_j \log\omega^{-1}}\right)^{1/b_j}\right\rceil\right) M^K\\
&\quad\times\sum_{\bsv\in \NN_0^s} \prod_{j=1}^s\omega^{2^{-b_j}
a_j v_j^{b_j}-v_j \frac{2\log (1+\omega^k)}{\log \omega^{-1}}}.
\end{align*}
However,
\begin{align*}
\sum_{\bsv\in \NN_0^s}
\prod_{j=1}^s\omega^{2^{-b_j} a_j v_j^{b_j}-v_j \frac{2\log (1+\omega^k)}{\log \omega^{-1}}}&=\prod_{j=1}^s
\sum_{v_j=0}^\infty\omega^{2^{-b_j} a_j v_j^{b_j}-v_j \frac{2\log (1+\omega^k)}{\log \omega^{-1}}}\\
&=\prod_{j=1}^s \left(1+ \omega^{2^{-b_j} a_j -\frac{2\log
(1+\omega^k)}{\log \omega^{-1}}}
+\sum_{v_j=2}^\infty\omega^{2^{-b_j} a_j v_j^{b_j}-v_j \frac{2\log
(1+\omega^k)}{\log \omega^{-1}}}\right).
\end{align*}
In the derivation of~\eqref{eqexpge0}, it was sufficient that
$v_j\ge 2$. Hence we can proceed analogously for the sum in the
latter expression to see that this sum is finite. Hence we
derive that
\begin{align*}
\left(\sum_{\bsv\in\NN_0^s}|\widehat{f_{\bsh}}(\bsv)|\right)^2<\infty.
\end{align*}
\end{proof}

\subsection{The proof of Theorem \ref{th:result}}\label{prThm1}

We now prove Theorem \ref{th:result}. To this end, we need the
following proposition.

\begin{prop}\label{prop_upper_expo_conv}
For $s\in \NN$ and $\e\in(0,1)$ define
\begin{align*}
m=\max_{j=1,2,\dots,s}\ \left\lceil
\left(\frac{2^{b_j+1}}{a_j}\,
\frac{\log\left(1+\frac{s \sqrt{8\pi}}{(1-\omega^{1/2}) \log(1+\eta^2)}\right)}{\log\,\omega^{-1}}\right)^{B(s)}\,\right\rceil,
\end{align*}
where
\begin{align*}
\eta=\left(\frac{\e^2}{2D(s,\omega,\bsb)^{\frac{1}{2B(s)+K+1}}}
\right)^{\frac{2B(s)+K+1}{2}}
\end{align*}
and $K=K(\omega)$ as in \eqref{def_K}. Let $m_1,m_2,\ldots,m_s$ be given by
\begin{align*}
m_j:=\left\lfloor m^{1/(B(s)  b_j)}\right\rfloor\ \ \ \ \
\mbox{for}\ \ \  j=1,2,\ldots,s\ \ \ \mbox{and}\ \ \
n=\prod_{j=1}^s m_j.
\end{align*}
Then for $M=2/\e^2$ we have
\begin{align*}
e^{{\rm app}}(\mathcal{H}(K_{s,\bsa,\bsb,\omega}),A_{n,s,M})\le\e\
\ \ \ \mbox{and}\ \ \ \
n=\mathcal{O}(\log^{\,B(s)}(1+\e^{-1}))
\end{align*}
with the factor in the $\mathcal{O}$ notation independent of
$\e^{-1}$ but dependent on $s$.
\end{prop}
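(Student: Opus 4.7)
The plan is to take \eqref{eqboundMapp2} as the starting point and tune the free parameters $M$ and $(m_1,\ldots,m_s)$ so that each of the two summands contributes at most $\e^2/2$. Setting $M=2/\e^2$ makes the first summand equal exactly $\e^2/2$, and a short algebraic check using the definition of $\eta$ shows that $F_n\le\eta^2$ is equivalent to $M^{2B(s)+K}D(s,\omega,\bsb)F_n\le\e^2/2$. The whole problem therefore reduces to: with the $m_j$'s as prescribed, show that $F_n\le\eta^2$.

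To bound $F_n$ I would regroup the sum by the support $u=\{j:v_j\ne 0\}$ of $\bsv\in\cG_{n,s}^{\perp}$. Each $v_j$ with $j\in u$ is even and satisfies $v_j\ge 2m_j$, so I write $v_j=2(m_j+l_j)$ with $l_j\in\NN_0$. Using $b_j\ge 1$ and $m_j\ge 1$, a mean-value argument gives $(m_j+l_j)^{b_j}\ge m_j^{b_j}+l_j$, which turns the inner $l_j$-sum into a geometric series with ratio at most $\omega^{1/2}$. This yields
\begin{align*}
F_n \le \prod_{j=1}^s\left(1+\frac{\sqrt{8\pi}\,\omega^{\frac{a_j}{2}m_j^{b_j}}}{1-\omega^{1/2}}\right)-1 \le \exp\!\left(\sum_{j=1}^{s}\frac{\sqrt{8\pi}\,\omega^{\frac{a_j}{2}m_j^{b_j}}}{1-\omega^{1/2}}\right)-1.
\end{align*}
Hence it suffices to make each of the $s$ summands in the exponent at most $\log(1+\eta^2)/s$. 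Solving for $m_j^{b_j}$, this becomes a lower bound of the form $m_j^{b_j}\ge 2L/(a_j\log\omega^{-1})$, where $L:=\log\bigl(1+s\sqrt{8\pi}/((1-\omega^{1/2})\log(1+\eta^2))\bigr)$.

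The definition of $m$ is engineered exactly so that this lower bound holds. From $m_j=\lfloor m^{1/(B(s)b_j)}\rfloor$ and (in the nontrivial regime) $m_j\ge\tfrac12 m^{1/(B(s)b_j)}$, one obtains $m_j^{b_j}\ge 2^{-b_j}m^{1/B(s)}$, and the factor $2^{b_j+1}$ inside the definition of $m$ absorbs this loss. I expect the main technical annoyance to be the uniform-in-$j$ verification together with the edge case $m^{1/(B(s)b_j)}<2$; the latter I would dispose of separately by noting that in this regime $m$ itself is bounded by a constant depending only on $\omega$, $\bsa$ and $\bsb$, so the inequality $F_n\le\eta^2$ can be checked directly.

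For the complexity estimate, the key identity is $\sum_{j=1}^s 1/(B(s)b_j)=1$, which gives $n=\prod_{j=1}^s m_j\le m^{\sum_j 1/(B(s)b_j)}=m$. Since $\eta^2$ is a fixed power of $\e^2$ times an $s$-dependent constant, $\log(1+\eta^2)$ behaves like $\eta^2$ as $\e\to 0$, so $L=\mathcal{O}(\log\e^{-1})$ with an $s$-dependent constant; consequently $m=\mathcal{O}(\log^{B(s)}(1+\e^{-1}))$, which is the claimed bound on $n$.
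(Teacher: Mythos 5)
Your proposal is correct and follows essentially the same route as the paper's proof: start from \eqref{eqboundMapp2}, observe that $M=2/\e^2$ coincides with the balancing choice $M=(D(s,\omega,\bsb)\eta^2)^{-1/(2B(s)+K+1)}$ so that each summand is $\e^2/2$ once $F_n\le\eta^2$, then factor $F_n$ over the coordinates, bound each tail by a geometric series with ratio $\omega^{1/2}$ using $a_j,b_j\ge1$, invoke $\lfloor x\rfloor\ge x/2$ to get $m_j^{b_j}\ge 2^{-b_j}m^{1/B(s)}$, and conclude $n\le m$ from $\sum_j 1/(B(s)b_j)=1$. The only cosmetic difference is that the paper uses $(1+\log(1+\eta^2)/s)^s\le\exp(\log(1+\eta^2))$ where you exponentiate the sum directly, and your separate treatment of the case $m^{1/(B(s)b_j)}<2$ is unnecessary since $\lfloor x\rfloor\ge x/2$ already holds for all $x\ge1$.
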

\begin{proof}
From \eqref{def_Fn} we have
\begin{align*}
F_n&=\sum_{\bsv\in \cG_{n,s}^{\bot} \setminus \{\bszero\}}(\sqrt{8\pi})^{\abs{\bsv}_*}\omega^{\frac{1}{2}\sum_{j=1}^s a_j (v_j/2)^{b_j}}\\
&=-1+\prod_{j=1}^s \left(1+\sqrt{8\pi} \sum_{h=m_j}^\infty \omega^{\frac{1}{2} a_j h^{b_j}}\right)\\
&=-1+\prod_{j=1}^s \left(1+\omega^{\frac{1}{2} a_j m_j^{b_j}}\sqrt{8\pi} \sum_{h=m_j}^\infty \omega^{\frac{1}{2} a_j (h^{b_j}-m_j^{b_j}) }\right)\\
& \le-1+\prod_{j=1}^s \left(1+\omega^{\frac{1}{2} a_j m_j^{b_j}}\sqrt{8\pi} \sum_{h=0}^\infty \omega^{\frac{1}{2} h}\right)\\
&=-1+\prod_{j=1}^s \left(1+\omega^{\frac{1}{2} a_j
m_j^{b_j}}\frac{\sqrt{8\pi}}{1-\omega^{1/2}}\right),
\end{align*}
where we used that $a_j (h^{b_j}-m_j^{b_j}) \ge h-m_j$, since $a_j,b_j \ge 1$. 

Since $\lfloor x\rfloor\ge x/2$ for all $x\ge1$, we have
\begin{align*}
(2m_j)^{b_j}\ge m^{1/B(s)}\qquad\mbox{for all}\qquad
j=1,2,\dots,s.
\end{align*}
Hence,
\begin{align*}
F_n \le -1+\prod_{j=1}^s \left(1+\omega^{m^{1/B(s)} a_j
2^{-b_j-1}}\frac{\sqrt{8\pi}}{1-\omega^{1/2}}\right).
\end{align*}

From the definition of $m$ we have
\begin{align*}
\omega^{m^{1/B(s)} a_j
2^{-b_j-1}}\frac{\sqrt{8\pi}}{1-\omega^{1/2}}\le
\frac{\log(1+\eta^2)}{s}\qquad\mbox{for all}\qquad j=1,2,\dots,s.
\end{align*}
This proves
\begin{align}\label{eqFnbound}
F_n\le-1+\left(1+\frac{\log(1+\eta^2)}{s}\right)^s\le
-1+\exp(\log(1+\eta^2))=\eta^2.
\end{align}
Now, plugging this into \eqref{eqboundMapp2}, we obtain
\begin{align}\label{eqboundMapp3}
[e^{{\rm
app}}(\mathcal{H}(K_{s,\bsa,\bsb,\omega}),A_{n,s,M})]^2\le\frac{1}{M}+
M^{2B(s)+K} D(s,\omega,\bsb) \eta^2.
\end{align}
Note that
\begin{align*}
(D(s,\omega,\bsb)\eta^2)^{-\frac{1}{2B(s)+K+1}}=\frac{2}{\e^2}\ge 1.
\end{align*}
Hence we are allowed to choose
\begin{align*}
M=(D(s,\omega,\bsb)\eta^2)^{-\frac{1}{2B(s)+K+1}},
\end{align*}
which yields, inserting into \eqref{eqboundMapp3},
%C replaced above lines according to the referee
%FP I agree with Peter, we should leave it as it was; however, I simplified the notation
%FP Note that 
%FP \begin{align*}
%FP D(s,\omega,\bsb)\eta^{2}=\left(\frac{2}{\e^2}\right)^{2B(s)+K+1}\ge 1.
%FP \end{align*}
%FP Hence let $M=2/\e^2$, then by \eqref{eqboundMapp3},
\begin{align*}
[e^{{\rm
app}}(\mathcal{H}(K_{s,\bsa,\bsb,\omega}),A_{n,s,M})]^2\le
2 (D(s,\omega,\bsb)\eta^2)^{\frac{1}{2B(s)+K+1}}=\e^2,
\end{align*}
as claimed.

It remains to verify that $n$ is of the order stated in the
proposition. Note that
\begin{align*}
n=\prod_{j=1}^s m_j=\prod_{j=1}^s \left\lfloor m^{1/(B(s) b_j)}
\right\rfloor \le m^{\frac{1}{B(s)}\sum_{j=1}^s1/b_j} = m.
\end{align*}
However,
\begin{align*}
m=\mathcal{O}(\log^{B(s)}(1 +\eta^{-1})),
\end{align*}
as $\eta$ tends to zero. From this, it is easy to see that we
indeed have
\begin{align*}
n=\mathcal{O}(\log^{B(s)}(1 +\e^{-1})),
\end{align*}
which concludes the proof of Proposition
\ref{prop_upper_expo_conv}.
\end{proof}

We now prove the successive points of Theorem \ref{th:result}.
\subsubsection*{Proof of Point \ref{exp} (Exponential Convergence)}

We conclude from  Proposition \ref{prop_upper_expo_conv} that
\begin{align*}
n(\e,\APP_s,\Lambda^{\rm{std}})=\mathcal{O}(\log^{B(s)}(1
+\e^{-1})).
\end{align*}
This implies that we indeed have EXP for all $\bsa$ and $\bsb$,
with $p(s)=1/B(s)$, and thus $p^* (s)\ge 1/B(s)$. On the other
hand, note that obviously $e(n,\APP_s,\lstd)\ge
e(n,\APP_s,\lall)$, hence the rate of EXP for $\Lambda^{\rm{std}}$
cannot be larger than for $\Lambda^{\rm{all}}$ which is $1/B(s)$.
Thus, we have $p^*(s)= 1/B(s)$.

\medskip
\subsubsection*{Proof of Point \ref{uexp} (Uniform Exponential Convergence)}

Suppose first that $\bsa$ is an arbitrary sequence and that $\bsb$
is such that
\begin{align*}
B=\sum_{j=1}^\infty \frac{1}{b_j}<\infty.
\end{align*}
Then we can replace $B(s)$ by $B$ in
Proposition~\ref{prop_upper_expo_conv}, and we obtain
\begin{align*}
n(\varepsilon,\APP_s,\lstd)=\mathcal{O}\left(\log^{B}\left (1
+\e^{-1}\right)\right),
\end{align*}
hence UEXP with $p^* \ge 1/B$ holds. On the other hand, if we have
UEXP for $\Lambda^{\rm{std}}$, this implies UEXP for
$\Lambda^{\rm{all}}$, which in turn implies that $B<\infty$ and
that $p^*  \le 1/B$.

\medskip
\subsubsection*{Proof of Point \ref{wt} (EC-Weak Tractability)}
Assume that EC-WT holds for the class $\Lambda^{\rm{std}}$. Then
EC-WT also holds for the class $\Lambda^{\rm{all}}$ and this
implies that $\lim_ja_j=\infty$, as claimed.

Assume now that $\lim_ja_j=\infty$. We consider the operator
\begin{align*}
W_s:=\APP_s^\ast \APP_s: \calH (K_{s,\bsa,\bsb,\omega})\To \calH
(K_{s,\bsa,\bsb,\omega}),
\end{align*}
which is given by
\begin{align*}
W_s f=\sum_{\bsk\in\NN_0^s}\omega_{\bsk} \langle
f,e_{\bsk}\rangle_{K_{s,\bsa,\bsb,\omega}} e_{\bsk} \ \ \ \mbox{
for } \ f\in\calH (K_{s,\bsa,\bsb,\omega}),
\end{align*}
where $e_{\bsk}=\sqrt{\omega_{\bsk}} H_{\bsk}$ and $\langle e_{\bsk},e_{\bsl}\rangle=\delta_{\bsk,\bsl}$. We then have
\begin{align*}
W_s e_{\bsk}=\omega_{\bsk}e_{\bsk}\quad\textnormal{ for all }\
\bsk\in\NN_0^s,
\end{align*}
so the eigenpairs of $W_s$ are $(\omega_{\bsk},e_{\bsk})$ for
$\bsk \in \NN_0^s$; see \cite[Section 3]{IKPW15} for more details.
%FP added                                        ^^^^^^^^^^^^^^^^ 

We use \cite[Theorem~26.18]{NW12} which states that if the
ordered eigenvalues $\lambda_{s,n}$ of $W_s$ satisfy
\begin{equation}\label{2618}
\lambda_{s,n}\le \frac{M^{\,2}_{s,\tau}}{n^{2\tau}} \ \ \ \ \ \ \
\mbox{for all} \ \ \ \ \ n\in\NN,
\end{equation}
for some positive $M_{s,\tau}$ and $\tau>\tfrac12$ then there is a
semi-constructive algorithm\footnote{By semi-constructive we mean
that this algorithm can be constructed after a few random
selections of sample points, more can be found in~\cite[Section
24.3]{NW12}.} such that
\begin{equation}\label{2619}
e(n+2,\APP_s;\lstd)\le\frac{M_{s,\tau}\,C(\tau)}{n^{\tau(2\tau/(2\tau+1))}}\
\ \ \ \ \ \   \mbox{for all} \ \ \ \ \ n\in\NN
\end{equation}
where $C(\tau)$ is given explicitly in \cite[Theorem~26.18]{NW12}.
However, the form of $C(\tau)$ is not important for our
consideration.

For $\eta\in(0,1)$, let $\tau=1/(2\eta)>\tfrac12$. We stress that
$\tau$ can be arbitrarily large if we take sufficiently small
$\eta$. Now we have
\begin{align*}
n\lambda_{s,n}^{\eta}\le\sum_{j=1}^\infty\lambda_{s,j}^{\eta}=\sum_{\bsh\in\NN_0^s}
\omega_{\bsh}^{\eta}=\prod_{j=1}^s\left(1+\sum_{h=1}^\infty\omega^{\,\eta\,a_j\,h^{b_j}}\right).
\end{align*}
Note that
\begin{align*}
\sum_{h=1}^\infty\omega^{\,\eta\,a_j\,h^{b_j}}\le
\sum_{h=1}^\infty\omega^{\,\eta\,a_j\,h}=\omega^{\eta\,a_j}\sum_{h=1}^\infty\omega^{\,\eta\,a_j\,(h-1)}\le\omega^{\eta\,a_j}A_{\eta},
\end{align*}
where
\begin{align}\label{eqdefAeta}
A_\eta:= \sum_{h=0}^\infty
\omega^{\,\eta\,h}=\frac{1}{1-\omega^{\eta}}<\infty.
\end{align}
This proves that
\begin{align*}
\lambda_{s,n}\le\frac{\prod_{j=1}^s\left(1+\,\omega^{\eta\,a_j}A_\eta\right)^{1/\eta}}{n^{1/\eta}}.
\end{align*}
Hence, we can take
\begin{align*}
M_{s,\tau}=\prod_{j=1}^s\left(1+c_j\right)^{\tau}<\infty\quad\mbox{with}\quad
c_j=\omega^{\,a_j/(2\tau)}A_{\frac{1}{2\tau}},
\end{align*}
where $A_{\frac{1}{2\tau}}$ is defined as in \eqref{eqdefAeta}.
Furthermore, we know that $\lim_ja_j=\infty$ implies that
$\lim_s\sum_{j=1}^sc_j/s=0$.

From \eqref{2619} we obtain
$$
n(\varepsilon,\APP_s;\lstd)\le3+\left(M_{s,\tau}\,C(\tau)\right)^{(1+1/(2\tau))/\tau}\,\e^{-(1+1/(2\tau))/\tau}.
$$
This yields that
$$
\limsup_{s+ \e^{-1}\to\infty}\frac{\log\,n(\varepsilon,\APP_s;\lstd)}{s+\log\,\e^{-1}}\le
\left(1+\frac1{2\tau}\right)\,\frac1{\tau}\,\left(1+\limsup_{s\to\infty}\frac{\log\,M_{s,\tau}}{s}\right).
$$
Since $(\log M_{s,\tau})/s\le \tau\,\sum_{j=1}^sc_j/s$ tends to
zero as $s\rightarrow\infty$, we have
$$
\limsup_{s+\e^{-1}\to\infty}\frac{\log\,n(\varepsilon,\APP_s;\lstd)}{s+\log\,\e^{-1}}\le
\left(1+\frac1{2\tau}\right)\,\frac1{\tau}.
$$
Since $\tau$ can be arbitrarily large this proves that
$$
\lim_{s+\e^{-1}\to\infty}\frac{\log\,n(\varepsilon,\APP_s;\lstd)}{s+\log\,\e^{-1}}=0.
$$
This means that EC-WT holds for the class $\Lambda^{\rm{std}}$, as
claimed.

\medskip
\subsubsection*{Proof of Point \ref{pt} (EC-Polynomial Tractability)}
Suppose that EC-PT holds for the class $\Lambda^{\rm{std}}$. Then
EC-PT holds for the class $\Lambda^{\rm{all}}$. From~\cite{IKPW15}
we know that this implies EC-SPT for the class $\lall$ which is
equivalent to $B<\infty$ and $\alpha^*>0$. If the conditions
$B<\infty$ and $\alpha^*>0$ hold, we will show in the following
that this implies EC-SPT and therefore we also have EC-PT.

\medskip
\subsubsection*{Proof of Point \ref{spt} (EC-Strong Polynomial Tractability)}
The necessity of the conditions for EC-SPT on $\bsb$ and $\bsa$
follows from the same conditions for the class
$\Lambda^{\rm{all}}$ and the fact that the information complexity
for $\Lambda^{\rm{std}}$ cannot be smaller than for
$\Lambda^{\rm{all}}$.

To prove the sufficiency of the conditions for EC-SPT on $\bsa$
and $\bsb$ stated in Point \ref{spt} we analyze the algorithm
$A_{n,s,M}$ given by \eqref{eqdefAnsM}, where the sample points
$\bsx_k$ come from a Gauss-Hermite rule with
\begin{align*}
m_j=2\,\left\lceil\,
\left(\frac{\log\,M}{a_j^{\beta}\log\,\widetilde{\omega}^{-1}}\right)^{1/b_j}\right\rceil\,-\,1\qquad\mbox{for
all}\qquad j=1,2,\dots,s,
\end{align*}
where $M>1$, $\beta\in(0,1)$, and
$\widetilde{\omega}:=\omega^{\frac{1}{2K+2}}$ with $K=K(\omega)$,
defined in \eqref{def_K}. Note that $m_j\ge1$ and is always an
odd number. Furthermore $m_j=1$ if $a_j\ge ((\log M)/(\log
\widetilde{\omega}^{-1}))^{1/\beta}$. We know that $\alpha^* \in
(0,\infty]$. Since for all $\delta\in(0,\alpha^*)$ we have
\begin{align*}
a_j\ge \exp(\delta j) \quad\mbox{for all}\quad j\ge j^*_\delta,
\end{align*}
we conclude that
\begin{align*}
j\ge j^{*}_{\beta,\delta,M}:=\max\left(j^*_\delta,\frac{\log(((\log
M)/(\log\widetilde{\omega}^{-1}))^{1/\beta})}{\delta}\right)\quad
\mbox{implies}\quad m_j=1.
\end{align*}

For given $\bsh\in\A (s,M)$ and $\bsr\in\NN_0^s$ suppose that
\begin{align*}
\bsh\oplus_{\bsr}\bsv^{(1)}=\bsh\oplus_{\bsr}\bsv^{(2)}
\end{align*}
for some $\bsv^{(1)},\bsv^{(2)}\in \cG_{n,s}^{\bot} \setminus
\{\bszero\}$, $\bsv^{(1)}\neq \bsv^{(2)}$. This means that for all
$j\in\{1,\ldots,s\}$ we must have $|h_j-v_j^{(1)}|+2r_j =
|h_j-v_j^{(2)}|+2r_j$, which is equivalent to $|h_j-v_j^{(1)}| =
|h_j-v_j^{(2)}|$. As $\bsv^{(1)}\neq \bsv^{(2)}$, there must be at least one 
$j \in\{1,\ldots,s\}$ such that $v_j^{(1)}\neq v_j^{(2)}$. For this $j$, the condition 
$|h_j-v_j^{(1)}| =|h_j-v_j^{(2)}|$ is then equivalent to $2h_j=v_j^{(1)}+v_j^{(2)}$. From the
choice of $\bsh,\bsv^{(1)}$ and $\bsv^{(2)}$ it follows that for this $j$ we must have
 $$2 h_j=v_j^{(1)}+v_j^{(2)} \ge \max(v_j^{(1)},v_j^{(2)}) \ge 2 m_j$$ 
and hence for this $j$ we have $h_j \ge m_j$. This leads to a
contradiction, because if $h_j$ is the $j$th component of
$\bsh\in\A (s,M)$, we must have $$m_j \le h_j < \left(\frac{\log
M}{a_j \log\omega^{-1}}\right)^{1/b_j}\le \left(\frac{\log
M}{a_j^{\beta} \log\widetilde{\omega}^{-1}}\right)^{1/b_j}
\le\frac{m_j+1}{2} \le m_j$$ for each $j\in\{1,\ldots,s\}$.

Consequently, each coefficient $\widehat{f}
(\bsh\oplus_{\bsr}\bsv)$ occurs at most once in~\eqref{eqpoint5},
and so we get rid of the factor $2^s$ in the upper bound
\eqref{estTheta1} of $\Theta_1$. This way we obtain the improved
bound
\begin{align*}
\Theta_1 \le  \norm{f}_{K_{s,\bsa,\bsb,\omega}}^2
\prod_{j=1}^{\min (s,j(x_M))} \left\lceil \left(\frac{\log M}{a_j
\log\omega^{-1}}\right)^{1/b_j}\right\rceil.
\end{align*}
Together with our previous upper bounds on $\Theta_2$ we obtain
\begin{align*}
e_{n,s}^2:=&[e(A_{n,s,M},\APP_s;\lstd)]^2\\
\le & \frac1M\,+\,\sum_{\bsh\in\mathcal{A}(s,M)} \left(\prod_{j=1}^{\min (s,j(x_M))}
\left\lceil \left(\frac{\log M}{a_j \log\omega^{-1}}\right)^{1/b_j}\right\rceil\right)\\
&\times M^K \sum_{\bsv\in \cG_{n,s}^{\bot} \setminus
\{\bszero\}}(\sqrt{8\pi})^{\abs{\bsv}_*}\omega^{\frac{1}{2}\sum_{j=1}^s
a_j (v_j/2)^{b_j}}.
\end{align*}

%C added the next line
For $s\in\NN$ we use the notation $[s]=\{1,\ldots,s\}$.
We now estimate
\begin{align*}
\sum_{\bsv\in \cG_{n,s}^{\bot} \setminus
\{\bszero\}}(\sqrt{8\pi})^{\abs{\bsv}_*}\omega^{\frac{1}{2}\sum_{j=1}^s
a_j (v_j/2)^{b_j}}=\sum_{\emptyset\neq\uu\subseteq
[s]}\prod_{j\in\uu} \sum_{\ell=0}^\infty
\sqrt{8\pi}\omega^{\frac{1}{2}a_j ((2m_j +2\ell)/2)^{b_j}},
\end{align*}
where we separated the cases for $v_j \neq 0$ and $v_j=0$.

Note that, as $m_j,b_j\ge 1$,\label{bjgr1(3)}
\begin{align*}
((2 m_j
+2\ell)/2)^{b_j}\ge\left(\frac{m_j+1}{2}+\ell\right)^{b_j}\ge\left(\frac{m_j+1}{2}\right)^{b_j}+\ell^{b_j}.
\end{align*}
Hence,
\begin{align*}
\sum_{\ell=0}^\infty \sqrt{8\pi}\omega^{\frac{1}{2}a_j ((2m_j +2\ell)/2)^{b_j}}
&\le\sum_{\ell=0}^\infty \sqrt{8\pi}\omega^{\frac{1}{2}a_j \left(\left(\frac{m_j+1}{2}\right)^{b_j}+\ell^{b_j}\right)}\\
&=\omega^{\frac{1}{2}a_j \left(\frac{m_j+1}{2}\right)^{b_j}} \sum_{\ell=0}^\infty\sqrt{8\pi}\omega^{\frac{1}{2}a_j \ell^{b_j}}\\
&\le \omega^{\frac{1}{2}a_j \left(\frac{m_j+1}{2}\right)^{b_j}}A,
\end{align*}
where $A=\frac{\sqrt{8 \pi}}{1-\sqrt{\omega}}$. Consequently,
\begin{eqnarray*}
\sum_{\bsv\in \cG_{n,s}^{\bot} \setminus \{\bszero\}}(\sqrt{8\pi})^{\abs{\bsv}_*}\omega^{\frac{1}{2}\sum_{j=1}^s a_j (v_j/2)^{b_j}}
& \le & \sum_{\emptyset\neq\uu\subseteq
[s]}\prod_{j\in\uu}\omega^{\frac{1}{2}a_j
\left(\frac{m_j+1}{2}\right)^{b_j}}A \\
& = & -1+\prod_{j=1}^s
\left(1+\omega^{\frac{1}{2}a_j
\left(\frac{m_j+1}{2}\right)^{b_j}}A\right).
\end{eqnarray*}

Furthermore,
\begin{align*}
e_{n,s}^2 \le & \frac{1}{M}+ M^K |\mathcal{A}(s,M)| \left(\prod_{j=1}^{\min (s,j(x_M))} \left\lceil
\left(\frac{\log M}{a_j \log\omega^{-1}}\right)^{1/b_j}\right\rceil\right)\\
&\times\left(-1 + \prod_{j=1}^s \left(1+\omega^{\frac{1}{2}a_j
\left(\frac{m_j+1}{2}\right)^{b_j}}A\right)\right).
\end{align*}
Using $\log (1+x) \le x$ we obtain
\begin{align*}
\log\left[ \prod_{j=1}^s \left(1+\omega^{\frac{1}{2}a_j
\left(\frac{m_j+1}{2}\right)^{b_j}}A \right)\right] \le  A
\sum_{j=1}^\infty \omega^{\frac{1}{2}a_j
%C        ^ \intfy instead of s
\left(\frac{m_j+1}{2}\right)^{b_j}}=:\gamma.
\end{align*}
From the definition of $m_j$ we have $a_j[(m_j+1)/2]^{b_j}\ge
a_j^{1-\beta}\,(\log\,M)/\log\,\widetilde{\omega}^{-1}$. Therefore
\begin{align*}
\omega^{\frac{1}{2}a_j \left(\frac{m_j+1}{2}\right)^{b_j}}&=\omega^{\frac{1}{2K+2}\frac{2K+2}{2}a_j
\left(\frac{m_j+1}{2}\right)^{b_j}}=\widetilde{\omega}^{a_j \left(\frac{m_j+1}{2}\right)^{b_j} (K+1)}\\
&\le \widetilde{\omega}^{a_j^{1-\beta} (K+1)
\,(\log\,M)/\log\,\widetilde{\omega}^{-1}}=\left(\frac{1}{M^{K+1}}\right)^{a_j^{1-\beta}}.
\end{align*}

Without loss of generality, we assume $M\ge \ee$. Since $a_j\ge 1$
for $j\le j^*_{\beta,\delta,M}-1$ and $a_j\ge \exp(\delta j)$ for
$j\ge j^*_{\beta,\delta,M}$ we obtain
\begin{align*}
\gamma\le
A\,\left(\frac{j^*_{\beta,\delta,M}-1}{M^{K+1}}+\sum_{j=j^*_{\beta,\delta,M}}^\infty\left(\frac{1}{M^{K+1}}\right)^{\exp((1-\beta)
\delta j)}\right).%\le\frac{C_{\beta,\delta}}{M^{K+1}},
\end{align*}
Note that there exists a constant $C>0$ such that
\begin{align*}
j^*_{\beta,\delta,M}\leq (\log\log M)\,j^*_{\beta,\delta}
\end{align*}
with 
\begin{align*}
j^*_{\beta,\delta}:=C \max\left(j^*_{\delta}, \frac{1-\log\log \tilde{\omega}^{-1}}{\delta\beta}\right).
\end{align*}
Thus
\begin{align*}
\gamma\le
A\,\left(\frac{(\log\log M)j^*_{\beta,\delta}-1}{M^{K+1}}+\sum_{j=0}^\infty\left(\frac{1}{M^{K+1}}\right)^{\exp((1-\beta)
\delta j)}\right)\le\frac{C_{\beta,\delta}}{M^{K}},
\end{align*}
with
\begin{align*}
C_{\beta,\delta}:=A\,\left(j^*_{\beta,\delta}-1+\sum_{j=0}^\infty\left(\frac{1}{\ee}\right)^{\exp((1-\beta)
\delta j)-1}\right)<\infty,
\end{align*}
where we made use of $M\ge \ee$. Note that for $M\ge
C_{\beta,\delta}^{1/K}$ we have $\gamma\le 1$.

Using convexity we easily check that $-1+\exp(\gamma) \le
(\mathrm{e}-1)\gamma$ for all $\gamma\in[0,1]$. Thus for $M\ge
C_{\beta,\delta}^{1/K}$ we obtain
\begin{align*}
-1 + \prod_{j=1}^s \left(1 +\omega^{\frac{1}{2}a_j
\left(\frac{m_j+1}{2}\right)^{b_j}} A\right) \le
-1+\exp(\gamma)\le (\mathrm{e}-1)\gamma \le
\frac{C_{\beta,\delta}\,(\mathrm{e}-1)}{M^{K}}.
\end{align*}

We now turn to $|\mathcal{A}(s,M)|$. From the proof of
\cite[Theorem 9]{IKPW15} we get that
\begin{align*}
|\mathcal{A}(s,M)|\le2^{j^*_{\beta,\delta,M}}
\left(1+\frac{\log\,M}{\log\,\omega^{-1}}\right)^{B+(\log
2)/\delta}.
\end{align*}
as well as
\begin{align*}
\prod_{j=1}^{\min (s,j(x_M))} \left\lceil \left(\frac{\log M}{a_j
\log\omega^{-1}}\right)^{1/b_j}\right\rceil\le
2^{j^*_{\beta,\delta,M}}
\left(1+\frac{\log\,M}{\log\,\omega^{-1}}\right)^{B+(\log
2)/\delta}.
\end{align*}

Therefore
\begin{align*}
e^2_{n,s}\le
\frac{1}{M}\,\left[1+C_{\beta,\delta}(\mathrm{e}-1)4^{j^*_{\beta,\delta,M}}
\left(1+\frac{\log\,M}{\log\,\omega^{-1}}\right)^{2B+(2\log
2)/\delta}\right]\le \frac{D_{\beta,\delta}}{M^{1/2}},
\end{align*}
where
\begin{align*}
D_{\beta,\delta}:=\sup_{x\ge
C_{\beta,\delta}}\left(\frac1{x^{1/2}}+\frac{C_{\beta,\delta}(\mathrm{e}-1)(\log\,x)^{j_{\beta,\delta}^*\log4}}{x^{1/2}}\,
\left(1+\frac{\log\,x}{\log\,\omega^{-1}}\right)^{B+(\log
2)/\delta}\right)<\infty.
\end{align*}
Hence for
$M=\max(C_{\beta,\delta}^{1/K},D_{\beta,\delta}^{2}\,\e^{-4},\ee)$
we have
\begin{align*}
e_{n,s}\le \e.
\end{align*}
We estimate the number $n$ of function values used by the
algorithm $A_{n,s,M}$. We have
\begin{align*}
n&=\prod_{j=1}^s m_j=\prod_{j=1}^{\min(s,j^*_{\beta,\delta,M})}m_j\le\prod_{j=1}^{\min(s,j^*_{\beta,\delta,M})}
\left(1+2\left(\frac{\log\,M}{a_j^{\beta}\,\log\,\widetilde{\omega}^{-1}}\right)^{1/b_j}\right)\\
&\le 3^{j^*_{\beta,\delta,M}}\,\left(\frac{\log\,M}{\log\,\widetilde{\omega}^{-1}}\right)^B
=\mathcal{O}((1+\log\,\e^{-1})^{B+(\log 3)/(\beta\,\delta)}),
\end{align*}
where the factor in the big $\mathcal{O}$ notation depends only on
$\beta$ and $\delta$. This proves EC-SPT with
\begin{align*}
\tau=B+\frac{\log 3}{\beta\,\delta}.
\end{align*}
Since $\beta$ can be arbitrarily close to one, and $\delta$ can be
arbitrarily close to $\alpha^*$, the exponent $\tau^*$ of EC-SPT
is at most
\begin{align*}
B+\frac{\log 3}{\alpha^*},
\end{align*}
where for $\alpha^*=\infty$ we have $\frac{\log 3}{\alpha^*} = 0$.
This completes the proof of Theorem \ref{th:result}. $\hfill \qed$

\section{Relations to multivariate integration}\label{secint}

Multivariate integration
\begin{align*}
{\rm INT}_s(f)=\int_{\RR^s}f(\bsx) \varphi_s(\bsx)\,{\rm d}\bsx
%C                    ^\RR instead of the unit cube
\end{align*}
for $f$ from the Hermite space $\cH(K_{s,\bsa,\bsb,\omega})$ was
studied in \cite{IKLP14}. It is easy to see that multivariate
approximation using information from $\Lambda^{\rm std}$ is not easier than multivariate integration, see
e.g., \cite{NSW04}. More precisely, for any algorithm
$A_{n,s}(f)=\sum_{k=1}^n \alpha_k f(\bsx_k)$ for multivariate
approximation using the nodes $\bsx_1,\ldots,\bsx_n\in [0,1)^s$
and $\alpha_k\in \LL_2(\RR^s,\varphi_s)$, define
$\beta_k:=\int_{\RR^s} \alpha_k (\bsx) \varphi_s(\bsx) \rd \bsx$
and the algorithm
\begin{align*}
Q_{n,s}(f)=\sum_{k=1}^n\beta_k\,f(\bsx_k)
\end{align*}
for multivariate integration. Then
\begin{align*}
\abs{{\rm INT}_s(f) - Q_{n,s}(f)}&=\left|\int_{\RR^s}\left(f(\bsx)-\sum_{k=1}^n\alpha_k(\bsx)\,f(\bsx_k)\right)\varphi_s(\bsx) \,{\rm d}\bsx\right|\\
&\le\left(\int_{\RR^s}\left(f(\bsx)-\sum_{k=1}^n\alpha_k(\bsx)\,f(\bsx_k)\right)^2\varphi_s(\bsx)\,{\rm d}\bsx\right)^{1/2}\\
&=\|f-A_{n,s}(f)\|_{\LL_2}.
\end{align*}
This proves that for the worst-case error of integration we have
\begin{align*}
e^{{\rm
int}}(\cH(K_{s,\bsa,\bsb,\omega}),Q_{n,s}):=\sup_{\substack{f\in
\cH(K_{s,\bsa,\bsb,\omega})\\ \norm{f}_{K_{s,\bsa,\bsb,\omega}}\le
1}}\abs{{\rm INT}_s(f) - Q_{n,s}(f)}\le e^{{\rm
app}}(\cH(K_{s,\bsa,\bsb,\omega}),A_{n,s}).
\end{align*}
Since this holds for all linear approximation algorithms $A_{n,s}$
we conclude that
\begin{align}\label{eqintapp}
e(n,{\rm INT}_s):=\inf_{Q_{n,s}}e^{{\rm
int}}(\cH(K_{s,\bsa,\bsb,\omega}),Q_{n,s})\le e(n,\APP_s;\lstd),
\end{align}
where $e(n,{\rm INT}_s)$ denotes the $n$th minimal (worst-case)
error of integration. 

Furthermore for $n=0$ we have equality,
\begin{align*}
e(0,{\rm INT}_s)=e(0,\APP_s)=1.
\end{align*}
From these observations it follows that for $\varepsilon \in
(0,1)$ and $s \in \NN$ we have
\begin{align}\label{eqintapp2}
n(\varepsilon,{\rm INT}_s) \le n(\varepsilon,\APP_s;\lstd),
\end{align}
where $n(\varepsilon,{\rm INT}_s)$ is the information complexity
for the integration problem.

The inequalities \eqref{eqintapp} and \eqref{eqintapp2} mean that
all positive results for multivariate approximation also hold for
multivariate integration.

In \cite{IKLP14} the following results were proved:
\begin{thm}[{\cite[Theorem~1]{IKLP14}}]\label{thm2_int}
For the integration problem over the Hermite space
$\cH(K_{s,\bsa,\bsb,\omega})$ we have:
\begin{enumerate}
\item {\rm EXP} holds for all $\bsa$ and $\bsb$ considered, and
\begin{align*}
p^{*}(s)=\frac{1}{B(s)} \ \ \ \ \ \mbox{with}\ \ \ \ \
B(s):=\sum_{j=1}^s\frac1{b_j}.
\end{align*}
\item The following assertions are equivalent:
\begin{enumerate}
\item The $b_j^{-1}$'s are summable, i.e., $B:=\sum_j
b_j^{-1}<\infty$; \item we have {\rm UEXP}; \item we have {\rm
EC-PT}; \item we have {\rm EC-SPT}.
\end{enumerate}
If one of the assertions holds then $p^*=1/B$ and the exponent
$\tau^{\ast}$ of {\rm EC-SPT} is $B$. \item\label{wtnec} {\rm
EC-WT} implies that $\lim_{j \rightarrow \infty} a_j 2^{b_j}
=\infty$. \item\label{wtsuff} A sufficient condition for {\rm
EC-WT} is that there exist $\eta>0$ and $\beta>0$ such that $$a_j
2^{b_j} \ge \beta j^{1+\eta}\ \ \ \ \mbox{ for all }\ \ j \in
\NN.$$
\end{enumerate}
\end{thm}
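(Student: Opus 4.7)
The plan is to reuse the tensor-product Gauss--Hermite machinery of Section~\ref{applambdastd} (Lemmas~\ref{leGH} and~\ref{grile}) in the simpler integration setting, and to pair each upper bound with a matching lower bound via a dimension count in Hermite polynomial spaces. Since ${\rm INT}_s$ is a single linear functional, Lemma~\ref{grile} applies directly to $f$ itself, so neither the Hermite-multiplication detour of Lemma~\ref{le_Hermitseriesfh} nor the partition $\A(s,M)$ is needed; this should shorten the upper-bound proof considerably compared to the approximation case.

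For Point~1, I would obtain $p^*(s)\ge 1/B(s)$ by fixing the tensor-product rule $Q_{n,s}$ with $m_j=\lfloor m^{1/(B(s)b_j)}\rfloor$, combining Cauchy--Schwarz with Lemma~\ref{grile} to get
\begin{align*}
\left|\int_{\RR^s} f\,\varphi_s\rd\bsx-Q_{n,s}(f)\right|^2 \le \|f\|_{K_{s,\bsa,\bsb,\omega}}^2\sum_{\bsv\in\cG_{n,s}^{\perp}\setminus\{\bszero\}}(8\pi)^{|\bsv|_*/2}\omega_{\bsv},
\end{align*}
and estimating the remaining sum essentially as in~\eqref{eqFnbound} to produce $n=\mathcal{O}(\log^{B(s)}(1+\e^{-1}))$. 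The matching upper bound on $p^*(s)$ comes from a standard fooling argument: any $n$-point linear rule is determined by values in a codimension-$n$ subspace, so whenever $\prod_j d_j>2n$ a dimension count inside the span of $\{H_{\bsk}:0\le k_j<d_j\}$ produces a unit-norm polynomial that vanishes at the nodes but has integral of order $\omega_{\bsd}^{1/2}$; balancing $d_j\propto m^{1/(B(s)b_j)}$ reproduces the rate $1/B(s)$. Point~2 is then obtained by running the same construction uniformly in $s$ when $B<\infty$, which yields EC-SPT with $\tau^*\le B$; EC-SPT$\Rightarrow$EC-PT$\Rightarrow$UEXP is Proposition~\ref{propo1}; and UEXP forces $\inf_s p^*(s)>0$, which together with Point~1 forces $B<\infty$, and the lower bound $\tau^*\ge B$ is obtained by the same polynomial-count argument carried out uniformly in $s$.

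For Points~3 and~4, the key mechanism is that $\bsv\in\cG_{n,s}^{\perp}$ requires $v_j\ge 2m_j$ with $v_j$ even, so the effective per-coordinate decay in the error sum is $\omega^{a_j(2m_j)^{b_j}}$, which is why the sharper condition involves $a_j 2^{b_j}$ rather than $a_j$. For sufficiency (Point~4), the polynomial lower bound $a_j 2^{b_j}\ge\beta j^{1+\eta}$ makes the tails $\sum_j \omega^{c\,a_j 2^{b_j}}$ converge fast enough to absorb the $\mathcal{O}(2^s)$-type combinatorial factors, and a construction parallel to the proof of Point~\ref{wt} of Theorem~\ref{th:result} yields $\log n(\e,{\rm INT}_s)/(s+\log\e^{-1})\to 0$. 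For necessity (Point~3), I would argue contrapositively: if $a_j 2^{b_j}$ stays bounded along a subsequence, restrict to those coordinates and tensor up a one-dimensional fooling construction to force complexity exponential in~$s$. The main obstacle is making this last step quantitative, since the precise $2^{b_j}$ scaling must be transported through a tensorized lower-bound construction and matched against the EC-WT limit, which requires careful bookkeeping that has no analogue in the approximation setting for $\lall$.
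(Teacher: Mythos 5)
First, note that the paper you were given does not prove this theorem at all: it is quoted verbatim from \cite{IKLP14} and used only for comparison with Theorem~\ref{th:result}, so your proposal has to be measured against the proof in that reference. Your upper-bound half is essentially the right reconstruction: the tensor-product Gauss--Hermite rule, Lemma~\ref{grile} applied directly to $f$, Cauchy--Schwarz, and the geometric-tail estimate as in~\eqref{eqFnbound} do give $n=\mathcal{O}(\log^{B(s)}(1+\e^{-1}))$, and (with the refinement $a_j(2m_j)^{b_j}\ge\tfrac12\bigl(m^{1/B}+a_j2^{b_j}\bigr)$ to keep the sum over $j$ bounded uniformly in $s$) the same construction gives EC-SPT with $\tau\le B$ when $B<\infty$, as well as the sufficiency in Point~\ref{wtsuff}. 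This is indeed simpler than the approximation case for the reasons you state.

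The genuine gap is in every lower bound: the optimality $p^*(s)\le 1/B(s)$ in Point~1, the implications UEXP $\Rightarrow B<\infty$ and $\tau^*\ge B$ in Point~2, and all of Point~\ref{wtnec} rest on your ``standard fooling argument'', and that argument fails as stated. For integration the relevant functional is ${\rm INT}_s(f)=\widehat f(\bszero)$, so a unit-norm element of ${\rm span}\{H_{\bsk}:0\le k_j<d_j\}$ vanishing at the $n$ nodes, produced by a dimension count, carries no quantitative information about its integral: nothing in the counting argument prevents the codimension-$n$ solution space from lying inside $\{f:\widehat f(\bszero)=0\}$, and the assertion that such a function ``has integral of order $\omega_{\bsd}^{1/2}$'' is the statement appropriate to $\lall$-approximation (largest remaining singular value), not to a single functional. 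The proof in \cite{IKLP14}, following the Korobov-space template of \cite{DLPW11,KPW14}, instead takes a \emph{nonnegative} fooling function $f=g^2$ with $g$ a polynomial vanishing at the nodes, so that ${\rm INT}_s(f)=\|g\|_{\LL_2}^2>0$; one must then bound $\|g^2\|_{K_{s,\bsa,\bsb,\omega}}$, which forces you to expand products $H_kH_l$ via the Carlitz linearization formula --- exactly the ``Hermite-multiplication detour'' you declare unnecessary. This is also where the factor $2^{b_j}$ in Point~\ref{wtnec} actually originates: squaring doubles the degree, so the norm of $g^2$ involves $\omega_{2\bsd}^{-1}=\omega^{-\sum_ja_j2^{b_j}d_j^{b_j}}$, whence the necessary condition $\lim_ja_j2^{b_j}=\infty$. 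Your attribution of the $2^{b_j}$ to the exactness degree $2m_j$ of Gaussian quadrature explains only the sufficient condition in Point~\ref{wtsuff}. You do flag the quantitative difficulty in your final sentence, but without the squaring construction and the product formula the lower-bound half of the theorem is not proved.
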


Compared with our results for approximation from Theorem
\ref{th:result} we have:
\begin{itemize}
\item The conditions for EXP and for UEXP are the same for both
problems. \item For the integration problem UEXP and EC-SPT are
equivalent and these properties only depend on $\bsb$ but not on
$\bsa$. This makes a difference to the approximation problem where
we have the same condition on $\bsb$ as for the integration
problem in order to achieve UEXP. However, to obtain also EC-SPT for
approximation we must require that the sequence $\bsa$ grows at an
exponential rate. 
\item  For the integration problem there is a
gap between the necessary and sufficient condition for EC-WT
whereas for the approximation problem we have an {\it if and only
if} condition. With the help of our result for EC-WT for
approximation and with our previous considerations we can present a 
different sufficient condition for EC-WT for integration as compared to
\cite[Theorem~1]{IKLP14} (see Point 4 of Theorem~\ref{thm2_int}).
\begin{thm}\label{suffECWT}
A sufficient condition for {\rm EC-WT} for integration in
$\cH(K_{s,\bsa,\bsb,\omega})$ is that $\lim_{j \rightarrow \infty}
a_j =\infty$.
\end{thm}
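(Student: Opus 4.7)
The proof is essentially a direct corollary of two results already established in the paper, so my plan is to simply chain them together.

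The plan is to invoke the comparison inequality \eqref{eqintapp2}, which states that $n(\varepsilon,{\rm INT}_s) \le n(\varepsilon,\APP_s;\lstd)$ for all $\varepsilon \in (0,1)$ and $s \in \NN$, together with Point \ref{wt} of Theorem~\ref{th:result}. The latter tells us that whenever $\lim_{j\to\infty} a_j = \infty$, we have EC-WT for $\LL_2$-approximation in the class $\lstd$, i.e.,
\begin{align*}
\lim_{s+\varepsilon^{-1}\to\infty} \frac{\log n(\varepsilon,\APP_s;\lstd)}{s+\log \varepsilon^{-1}} = 0.
\end{align*}

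Combining these two facts, I would observe that
\begin{align*}
0 \le \frac{\log n(\varepsilon,{\rm INT}_s)}{s+\log \varepsilon^{-1}} \le \frac{\log n(\varepsilon,\APP_s;\lstd)}{s+\log \varepsilon^{-1}},
\end{align*}
where the lower bound uses the convention $\log 0 = 0$ (which handles the edge case $n(\varepsilon,{\rm INT}_s)=0$ should it arise, though in our setting the complexities are positive integers). Taking the limit as $s+\varepsilon^{-1}\to\infty$, the right-hand side tends to zero by the hypothesis applied to approximation, and therefore so does the quantity in the middle. This is precisely EC-WT for integration.

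There is essentially no obstacle here: the substantive work was done in proving Point \ref{wt} of Theorem~\ref{th:result} (the sufficiency direction, which used the eigenvalue estimate via \cite[Theorem~26.18]{NW12}), and in establishing the comparison \eqref{eqintapp2} between integration and $\lstd$-approximation. The novelty of Theorem~\ref{suffECWT} lies only in the observation that these two results together yield a sufficient condition for EC-WT for integration which is strictly weaker than the previously known Point~\ref{wtsuff} of Theorem~\ref{thm2_int} (no assumption on $\bsb$ is required at all), thereby closing part of the gap between the necessary condition (Point~\ref{wtnec} of Theorem~\ref{thm2_int}) and previously available sufficient conditions.
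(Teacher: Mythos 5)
Your proposal is correct and is essentially identical to the paper's own proof: the paper likewise applies Point 3a of Theorem~\ref{th:result} to get EC-WT for approximation under $\lim_j a_j=\infty$ and then transfers it to integration via \eqref{eqintapp2}. Your explicit sandwich of $\log n(\varepsilon,{\rm INT}_s)/(s+\log\varepsilon^{-1})$ between $0$ and the corresponding approximation quantity is just a slightly more detailed writing-out of the same one-line deduction.
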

\begin{proof}
Assume that we have $\lim_{j \rightarrow \infty} a_j =\infty$. Then
Theorem \ref{th:result} implies that we have EC-WT for the
approximation problem. But now it follows easily from
\eqref{eqintapp2} that we also have EC-WT for the integration
problem.
\end{proof}

Although Theorem \ref{suffECWT} is in some cases an improvement of the
sufficient condition for EC-WT for integration from \cite[Theorem~1]{IKLP14} there still remains
a small gap to the necessary condition.

\end{itemize}

\begin{small}
\noindent\textbf{Authors' addresses:}
\\ \\
\noindent Christian Irrgeher, Peter Kritzer, Friedrich Pillichshammer,
\\ 
Department of Financial Mathematics and Applied Number Theory, 
Johannes Kepler University Linz, Altenbergerstr.~69, 4040 Linz, Austria\\
 \\
\noindent Henryk Wo\'{z}niakowski, \\
Department of Computer Science, Columbia University, New York 10027,
USA, and Institute of Applied Mathematics, 
University of Warsaw, ul. Banacha 2, 02-097 Warszawa, Poland\\ \\

\noindent \textbf{E-mail:} \\
\texttt{christian.irrgeher@jku.at}\\
\texttt{peter.kritzer@jku.at}\\
\texttt{friedrich.pillichshammer@jku.at} \\
\texttt{henryk@cs.columbia.edu}
\end{small}


\begin{thebibliography}{00}

\bibitem{A50} N.~Aronszajn: Theory of reproducing kernels. Trans. Amer. Math. Soc. 68: 337--404, 1950.

\bibitem{B98} V.I.~Bogachev: {\it Gaussian Measures}.  Mathematical Surveys and Monographs, Vol.~62, American Mathematical Society, Providence, 1998.

\bibitem{C62} L.~Carlitz: The product of several Hermite or Laguerre polynomials. Monatsh. Math. 66: 393--396, 1962.

\bibitem{DKPW13} J.~Dick, P.~Kritzer, F.~Pillichshammer, H.~Wo\'{z}niakowski:
Approximation of analytic functions in Korobov spaces. J. Complexity 30: 2--28, 2014.

\bibitem{DLPW11} J.~Dick, G.~Larcher, F.~Pillichshammer, H.~Wo\'{z}niakowski:
Exponential convergence and tractability of multivariate integration for Korobov spaces. Math. Comp. 80: 905--930, 2011.

\bibitem{hildebrand} F.B.~Hildebrand: {\it Introduction to Numerical Analysis} 2nd ed. Dover Publications Inc., New York, 1974.

\bibitem{IKLP14} C.~Irrgeher, P.~Kritzer, G.~Leobacher, and F.~Pillichshammer:
Integration in Hermite space of analytic functions. J. Complexity 31: 380--404, 2015.

\bibitem{IKPW15} C.~Irrgeher, P.~Kritzer, F.~Pillichshammer and H.~Wo\'{z}niakowski:
Tractability of Multivariate Approximation defined over Hilbert spaces with Exponential Weights. submitted, 2015.  See arXiv:1502.03286

\bibitem{IL} C.~Irrgeher and G.~Leobacher: High-dimensional integration on the $\RR^d$,
weighted Hermite spaces, and orthogonal transforms. J. Complexity 31:174--205, 2015.

\bibitem{KPW14} P.~Kritzer, F.~Pillichshammer, and H.~Wo\'{z}niakowski:
Multivariate Integration of infinitely many times differentiable functions in weighted Korobov spaces. Math. Comp. 83: 1189--1206, 2014.

\bibitem{KPW14a} P.~Kritzer, F.~Pillichshammer, and H.~Wo\'{z}niakowski:
Tractability of multivariate analytic problems. In: 
{\it Uniform Distribution and Quasi-Monte Carlo Methods. Discrepancy, Integration and Applications} 
(P. Kritzer, H. Niederreiter, F. Pillichshammer, and A. Winterhof, eds.). De Gruyter, Berlin, 2014.

\bibitem{KSW06} F.Y.~Kuo, I.H.~Sloan, and H.~Wo\'{z}niakowski.
Lattice rules for multivariate approximation in the worst case setting.
In: \textit{Monte Carlo and Quasi-Monte Carlo Methods 2004} (H. Niederreiter and D. Talay, eds.). Springer, Berlin, pp. 289--330, 2006.

\bibitem{ll} G.~Larcher and G.~Leobacher: Quasi-Monte Carlo and Monte Carlo Methods and their Application in Finance. Surv. Math. Ind. 11: 95--130, 2005.

\bibitem{NSW04} E.~Novak, I.H.~Sloan, and H.~Wo\'{z}niakowski.
Tractability of approximation for weighted Korobov spaces on classical and quantum computers. Found. Comput. Math. 4: 121--156, 2004.

\bibitem{NW08} E.~Novak and H.~Wo\'zniakowski: \textit{Tractability of Multivariate Problems, Volume I: Linear Information}. EMS, Zurich, 2008.

\bibitem{NW10}
E.~Novak and H.~Wo\'zniakowski: \textit{Tractability of
Multivariate Problems, Volume II: Standard Informations for
Functionals}. EMS, Zurich, 2010.

\bibitem{NW12}
E.~Novak and H.~Wo\'zniakowski: \textit{Tractability of
Multivariate Problems, Volume III: Standard Informations for
Operators}. EMS, Zurich, 2012.

\bibitem{PP14} A.~Papageorgiou and I.~Petras: A new criterion for tractability of multivariate problems. J. Complexity 30: 604--619, 2014.

\bibitem{sansone} G.~Sansone: {\it Orthogonal Functions.} 2nd ed. John Wiley and Sons Inc, New York, 1977.

\bibitem{szeg} G.~Szeg\H{o}: {\it Orthogonal Polynomials.} 4th ed. Providence, RI: Amer. Math. Soc., 1975.

\bibitem{TWW88} J.F.~Traub, G.W.~Wasilkowski, and H.~Wo\'zniakowski: \textit{Information-Based Complexity.} Academic Press, New York, 1988.

\end{thebibliography}
\end{document}